\theoremstyle{plain}
 \newtheorem{thm}{Theorem}[section]
 \newtheorem{lem}[thm]{Lemma}
 \theoremstyle{definition}
 \newtheorem{defn}[thm]{Definition}
 \theoremstyle{remark}
 \newtheorem{rem}[thm]{Remark}
 \numberwithin{equation}{section}
\begin{document}

\title[Yamabe flow and the Myers-Type theorem]{Yamabe flow and the Myers-Type theorem on complete manifolds}

\author{Li Ma, Liang Cheng}

\address{Department of mathematics\\
Henan Normal University \\
Xinxiang 453007 \\
China} \email{nuslma@gmail.com} \dedicatory{}

\address{School of Mathematics and Statistics\\
 Huazhong Normal University\\
Wuhan, 430079, P.R. CHINA}

\email{math.chengliang@gmail.com} \dedicatory{}

\date{}

\begin{abstract}

In this paper,
 we prove the following Myers-type theorem: if $(M^n,g)$, $n\geq 3$, is
an n-dimensional complete locally conformally flat Riemannian
manifold with bounded Ricci curvature satisfying the Ricci
pinching condition $Rc\geq \epsilon Rg>0$, where $\epsilon>0$ is
an uniform constant, then $M^n$ must be compact.

{ \textbf{Mathematics Subject Classification} (2000): 35J60,
53C21, 58J05}

{ \textbf{Keywords}:  Yamabe flow, locally conformally flat,
 Ricci pinching condition, Myers-type theorem}
\end{abstract}

\thanks{$^*$ The research is partially supported by the National Natural Science
Foundation of China 10631020 and SRFDP 20090002110019 }
 \maketitle

\section{Introduction}
The Yamabe flow has been proposed by R.Hamilton \cite{H89} in the
early 1980s as a tool for constructing metrics of constant scalar
curvature in a given conformal class of Riemannian metrics on a
manifold of dimension no less than three. There are some
interesting results of the Yamabe flow on closed manifolds. For
the case when the initial metric is locally conformally flat and
has positive Ricci curvature, B.Chow \cite{chow} has proved that
the normalized Yamabe flow converges to a metric of constant
curvature on closed manifolds. Then R.Ye \cite{Y94} has improved
B.Chow's result, by assuming only that the initial metric is
locally conformally flat, that the normalized Yamabe flow
converges to a metric of constant scalar curvature on closed
manifolds in this case. Recently, S.Brendle \cite{BS2005} has
proved that the normalized Yamabe flow converges to a metric of
constant scalar curvature on closed manifolds if the initial
metric is locally conformally flat or $3\leq n\leq 5$, where $n$
is the dimension of the manifold. For other recent works of Yamabe
flow on closed manifolds, one may see \cite{BS2007} and \cite{SS}.

Precisely, in this paper, we consider the Yamabe flow $(M^n,g(t)),
t\in (0,T)$, satisfying
\begin{equation}\label{yamabe}
\frac{\partial g}{\partial t}=-Rg
\end{equation}
on a complete locally conformally flat Riemannian manifold
$(M^n,g(0))$, $n\geq 3$, where $g(t)$ evolves in the conformal
class of the given metric $g(0)$, i.e.
$g(t)=u(t)^{\frac{4}{n-2}}g(0)$ for $u(t)$ being a positive smooth
function on $M^n$, and $R$ is the scalar curvature of $g(t)$.

By using the Ricci flow and the Yamabe flow as tools, there are
some interesting Myers-type results, which state that if a
complete manifold with bounded curvature satisfies a pinching
condition, then this manifold must be compact. B.L.Chen and
X.P.Zhu \cite{CZ2000}, by using Ricci flow, have firstly proved
that if $M^n$ is a complete n-dimensional Riemannian manifold with
positive and bounded scalar curvature and satisfies the pinching
condition
$$
|W|^2+|V|^2\leq \delta_n(1-\epsilon)|U|^2,
$$
where $W$, $V$, $U$ are the Weyl part, scalar curvature part and
traceless Ricci part of the Riemannian curvature tensor
respectively and $\delta_4=\frac{1}{5}$, $\delta_5=\frac{1}{10}$,
$\delta_n=\frac{2}{(n-2)(n+1)}, n\geq 6$, $\epsilon>0$ is a
constant. When $n=3$, they have also proved if $M^3$ is
3-dimensional complete Riemannian manifold with bounded
and nonnegative sectional curvature satisfying the Ricci pinching
condition
$$
Rc\geq \epsilon Rg>0,
$$
where $\epsilon>0$ is a constant, then $M^n$ must be compact. Then
L.Ni and B.Q.Wu \cite{NW} have improved B.L.Chen and X.P.Zhu's
result for $n\geq 4$ and have proved that if $M^n$ is an
n-dimensional complete Riemannian manifold with bounded curvature
operator satisfying the pinching condition
$$
Rm\geq \delta R_I>0,
$$
where $\delta>0$ is a constant and $R_I$ is the scalar part of the
curvature operator $Rm$, then $M^n$ must be compact. In the recent
interesting work \cite{BS}, S.Brendle and R. Schoen have proved if
$M^n$, $n\geq 4$, is an n-dimensional complete Riemannian manifold
with bounded curvature operator satisfying the pinching condition
\begin{eqnarray*}
&&Rm(e_1,e_3,e_1,e_3)+\lambda^2Rm(e_1,e_4,e_1,e_4)+
\mu^2Rm(e_2,e_3,e_2,e_3)\\
&&+\lambda^2\mu^2Rm(e_2,e_4,e_2,e_4)-2\lambda\mu
Rm(e_1,e_2,e_3,e_4)\geq \delta R>0
\end{eqnarray*}
for all orthonormal four frames $\{e_1,e_2,e_3,e_4\} $ and all
$\lambda,\mu\in [-1,1]$, where $\delta>0$ is a constant and $R$ is
the scalar curvature, then $M^n$ must be compact.
In \cite{GH}, H.L.Gu has proved
that, by using the Yamabe flow, if $(M^n,g)$, $n\geq 3$, is an
n-dimensional complete locally conformally flat Riemannian
manifold with bounded Ricci curvature and nonnegative sectional
curvature satisfying the Ricci pinching condition
$$
Rc\geq \epsilon
Rg>0,
$$
 where $\epsilon>0$ is an uniform constant, then $M^n$ must
be compact.

We remark that all the results above need the condition that $M^n$
has nonnegative sectional curvature (or positive curvature
operator). This assumption gives a nature injectivity bound needed
in the convergence theorem, which also needed to apply dimension
reduction techniques (see \cite{RH2}). Without this assumption, we
meet the difficult conjecture of R.Hamilton,
 which states that if $M^3$ is a 3-dimensional complete
Riemannian manifold satisfying the Ricci pinching condition
$$Rc\geq \epsilon Rg>0,$$
where $\epsilon>0$ is an uniform constant,
then $M^3$ must be compact (see \cite{BC}).

In this paper, we prove the following result which partially
confirms the conjecture of R.Hamilton on 3-dimensional locally
conformally flat manifolds.

\begin{thm}\label{main}
If $(M^n,g)$, $n\geq 3$, is an n-dimensional complete locally
conformally flat Riemannian manifold with bounded Ricci curvature
satisfying the pinching condition
$$Rc\geq \epsilon Rg>0,$$
where $\epsilon>0$ is an uniform constant, then $M^n$ must be
compact.
\end{thm}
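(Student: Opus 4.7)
My plan is to run the Yamabe flow starting at $g(0)=g$, show that the Ricci pinching $Rc\ge \epsilon Rg$ is preserved along the flow, derive a uniform positive lower bound on the scalar curvature at some small time $t_0>0$, and then apply Myers' theorem to the complete metric $g(t_0)$. Compactness of $M^n$ is a topological invariant, so it transfers back to $(M,g)$.

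First, I would establish short-time existence of the Yamabe flow on the complete manifold $(M,g)$. Writing $g(t)=u(t)^{\frac{4}{n-2}}g$ reduces (\ref{yamabe}) to a scalar parabolic PDE for the conformal factor $u>0$, and the bounded Ricci hypothesis, together with completeness, produces a solution on a maximal interval $[0,T)$ with each $g(t)$ complete and LCF. Next I would verify that the cone $\{Rc\ge \epsilon Rg\}$ is preserved. Since LCF is preserved along the flow, $Rm(t)$ is an algebraic expression in $Rc(t)$ and $g(t)$, and consequently the evolution equation for $Rc$ under $\partial_t g=-Rg$ is a reaction-diffusion system whose reaction term respects this cone. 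An Omori-Yau style tensor maximum principle, justified by the uniform bound on $|Rc|$ on compact subintervals of $[0,T)$ (which in turn bounds $|Rm|$ through the LCF identity), then gives $Rc(\cdot,t)\ge \epsilon R(\cdot,t)\,g(\cdot,t)>0$ for all $t\in[0,T)$.

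The central difficulty, and the main obstacle, is to establish a uniform positive lower bound $\inf_{M}R(\cdot,t_0)\ge c>0$ for some $t_0\in(0,T)$. The scalar curvature evolves by
\[
\partial_t R=(n-1)\Delta R+R^{2},
\]
so the minimum principle gives $\inf_{M}R(\cdot,t)$ nondecreasing, and the strong parabolic maximum principle applied to $R$ as a nonnegative supersolution of a linear parabolic operator with bounded zeroth-order coefficient (namely $\partial_t-(n-1)\Delta-R$) yields $R(x,t)>0$ strictly for every $x\in M$ and $t>0$. To upgrade pointwise positivity to a uniform lower bound I would argue by contradiction. Assuming $\inf_{M}R(\cdot,t_0)=0$, pick $x_k\in M$ with $R(x_k,t_0)\to 0$; the bounds on $|Rm|$ coming from bounded $|Rc|$ plus LCF let Hamilton's compactness theorem for flows extract a smooth pointed limit Yamabe flow $(M_\infty,g_\infty(t),x_\infty)$ on an open interval around $t_0$, still complete, LCF, and inheriting the pinching, with $R_\infty\ge 0$ and $R_\infty(x_\infty,t_0)=0$. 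The strong parabolic maximum principle at the interior minimum $(x_\infty,t_0)$ forces $R_\infty\equiv 0$ on a backward parabolic neighborhood; the pinching then forces $Rc_\infty\equiv 0$, and the LCF identity forces $g_\infty$ to be flat there. Combining this local flatness with the strict positivity of $R$ along the original flow via a splitting / unique-continuation argument for LCF Yamabe flows then produces the contradiction.

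With the uniform lower bound $R(\cdot,t_0)\ge c>0$ in hand, the pinching improves to $Rc(\cdot,t_0)\ge \epsilon c\, g(t_0)>0$, a uniform positive Ricci lower bound on the complete manifold $(M,g(t_0))$. Myers' theorem then gives $\operatorname{diam}(M,g(t_0))<\infty$, so $M^n$ is compact. The routine parts are Step 1 (short-time existence and preservation of the cone) and Step 3 (Myers); the essential work is the uniform positivity of $R$ in Step 2.
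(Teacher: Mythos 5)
Your Steps 1 and 3 are fine (preservation of the cone $Rc\ge \epsilon Rg$ is B.~Chow's argument, extended to the complete setting, and Myers would indeed finish if one had $\inf_M R(\cdot,t_0)>0$), but Step 2 — the uniform positive lower bound on $R$ — contains a genuine gap, and it is exactly the heart of the matter. The minimum principle only says $\inf_M R(\cdot,t)$ is nondecreasing; if $M$ is noncompact it is perfectly possible that $\inf_M R(\cdot,0)=0$ with $R>0$ pointwise and $R\to 0$ at spatial infinity, and nothing in the evolution equation promotes this to a uniform positive bound at a later time. Your contradiction argument does not close: the pointed limit $(M_\infty,g_\infty,x_\infty)$ is extracted at base points $x_k$ escaping to infinity, so it is a \emph{different} manifold; the strong maximum principle forces it to be flat, but there is no unique continuation or splitting principle that transports this flatness back to the original $(M,g(t))$, and flat blow-downs at infinity are entirely consistent with $R>0$ on $M$. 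Indeed, Remark 1.2 of the paper reformulates the theorem as: a complete \emph{noncompact} manifold with these hypotheses and $R\ge 0$ must be flat — so the configuration you are trying to contradict (pinched, $R>0$, $R$ decaying at infinity, flat at infinity) is precisely the hypothetical object the theorem must exclude, and declaring a contradiction there is circular. A secondary gap: your appeal to Hamilton's compactness theorem at the points $x_k$ requires a uniform injectivity radius lower bound, which is unavailable on a noncompact manifold; the paper devotes considerable effort (the Fukaya--Glickenstein precompactness theorem, convergence in the sense of $C^\infty$-local submersions) to circumventing exactly this.

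The paper's route is structurally different and is worth absorbing: it classifies the maximal solution by singularity type (I, IIa, IIb, III), dilates about the singularity, and uses an \emph{improved} pinching estimate $\frac{|Rc|^2-\frac1n R^2}{R^{2-\delta}}\le (3t)^{-\delta}$ with $\delta=\frac{n\epsilon}{3}$ to force any ancient (Type I/II) limit to be Einstein, hence of constant positive curvature by local conformal flatness; the limit metric space is then an Alexandrov space with curvature bounded below by a positive constant, compact by the Bonnet--Myers theorem for Alexandrov spaces, which contradicts noncompactness. The Type III case is handled separately via Chow's Harnack inequality, a Chau--Tam style injectivity radius bound, and the classification of expanding solitons. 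None of this machinery is replaceable by the parabolic minimum principle alone.
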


\begin{rem}
Note that by the strong maximum principle, Theorem \ref{main} is
equivalent to say
 that if $(M^n,g)$, $n\geq 3$, is an n-dimensional
complete noncompact locally conformally flat Riemannian manifold
with bounded Ricci curvature satisfying the Ricci pinching condition
$Rc\geq \epsilon Rg$ and $R\geq 0$, where $\epsilon>0$ is an uniform
constant, then $M^n$ must be flat.
\end{rem}

\section{preliminaries}
  We shall recall some basic formulae and convergence results
  concerning with the Yamabe flows. Similar results for the Ricci
  flow are well-known.

We first recall some formulae from the fundamental paper
\cite{chow} (see Lemmas 2.2 and 2.4).

\begin{lem} If $(M^n,g(t))$, $n\geq 3$, is the solution to the Yamabe flow (\ref{yamabe}) on
an n-dimensional complete locally conformally flat Riemannian
manifold, then
\begin{equation}\label{scalar}
R_t=(n-1)\Delta R+R^2 \end{equation}
 and
\begin{equation}\label{ric}
\partial_t R_{ij}=(n-1)\Delta R_{ij}+\frac{1}{n-2}B_{ij},
\end{equation}
where
$$
B_{ij}=(n-1)|Ric|^2g_{ij}+nRR_{ij}-n(n-1)R_{ij}^2-R^2g_{ij}.
$$
\end{lem}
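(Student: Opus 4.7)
The plan is to derive both formulas by specializing the general curvature-variation identities to the Yamabe flow $\partial_t g_{ij}=-Rg_{ij}$, and then to invoke local conformal flatness only where it is needed, namely to turn spatial derivatives of $R$ into $\Delta R_{ij}$.

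For the scalar curvature equation I would start from the standard first variation
\begin{equation*}
\partial_t R \;=\; -\Delta(\mathrm{tr}_g h) \;+\; \nabla^i\nabla^j h_{ij} \;-\; h^{ij}R_{ij},
\end{equation*}
valid for any variation $\partial_t g_{ij}=h_{ij}$. Substituting $h_{ij}=-Rg_{ij}$, the trace is $-nR$, so the first term contributes $n\Delta R$; since $\nabla^i(-Rg_{ij})=-\nabla_jR$, the double divergence contributes $-\Delta R$; and the pairing with Ricci contributes $R\cdot R$. Adding these gives $(n-1)\Delta R+R^{2}$, which is \eqref{scalar}. Local conformal flatness is not needed here.

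For the Ricci equation I would apply the standard formula
\begin{equation*}
\partial_t R_{ij} \;=\; \tfrac{1}{2}g^{kl}\bigl(\nabla_k\nabla_ih_{jl}+\nabla_k\nabla_jh_{il}-\nabla_k\nabla_lh_{ij}-\nabla_i\nabla_jh_{kl}\bigr).
\end{equation*}
With $h_{ij}=-Rg_{ij}$ this collapses after a short computation to
\begin{equation*}
\partial_t R_{ij} \;=\; \tfrac{n-2}{2}\,\nabla_i\nabla_jR \;+\; \tfrac{1}{2}(\Delta R)\,g_{ij}.
\end{equation*}
The remaining task is to recognize the right-hand side as $(n-1)\Delta R_{ij}+\frac{1}{n-2}B_{ij}$. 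To do this I would use the vanishing of the Cotton tensor, which under local conformal flatness gives
\begin{equation*}
\nabla_k R_{ij}-\nabla_i R_{kj} \;=\; \tfrac{1}{2(n-1)}\bigl(g_{ij}\nabla_kR-g_{kj}\nabla_iR\bigr),
\end{equation*}
apply $\nabla^k$, commute covariant derivatives on $\nabla^k\nabla_iR_{kj}$, and use the contracted Bianchi identity $\nabla^kR_{kj}=\tfrac12\nabla_jR$. This produces an identity expressing $\nabla_i\nabla_jR$ and $(\Delta R)g_{ij}$ in terms of $\Delta R_{ij}$ plus algebraic curvature contractions of the form $R_{ikjl}R^{kl}$ and $R_{ik}R_j^{\ k}$.

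The final step is to expand those algebraic curvature contractions using the locally conformally flat decomposition
\begin{equation*}
R_{ikjl} \;=\; \tfrac{1}{n-2}\bigl(R_{ij}g_{kl}+R_{kl}g_{ij}-R_{il}g_{kj}-R_{kj}g_{il}\bigr)-\tfrac{R}{(n-1)(n-2)}\bigl(g_{ij}g_{kl}-g_{il}g_{kj}\bigr),
\end{equation*}
and to collect the resulting $|Ric|^{2}g_{ij}$, $RR_{ij}$, $R_{ij}^{2}$ and $R^{2}g_{ij}$ terms. Matching coefficients should give exactly the combination $B_{ij}/(n-2)$, yielding \eqref{ric}. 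I expect the only real obstacle here to be bookkeeping: verifying that the numerical coefficients $(n-1)$, $n$, $-n(n-1)$ and $-1$ in $B_{ij}$ fall out correctly from the two uses of local conformal flatness (the Cotton identity and the Weyl-free Riemann decomposition). Once the commutator and the algebraic substitution are set up carefully, the verification is routine but lengthy.
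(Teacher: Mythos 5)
The paper offers no proof of this lemma; it simply quotes Chow's Lemmas 2.2 and 2.4, and your outline is exactly the derivation used there: linearize $R$ and $R_{ij}$ along $\partial_t g=-Rg$ to get $\partial_tR_{ij}=\tfrac{n-2}{2}\nabla_i\nabla_jR+\tfrac12(\Delta R)g_{ij}$, then use the Codazzi property of the Schouten tensor (vanishing Cotton tensor), the commutator formula, the contracted Bianchi identity, and the Weyl-free decomposition of $R_{ikjl}$ to convert this into $(n-1)\Delta R_{ij}+\tfrac{1}{n-2}B_{ij}$. All the identities you invoke are correct, and a useful consistency check you could add is that the coefficient $\tfrac12-\tfrac{n-2}{2}\cdot\tfrac{1}{n-2}=0$ makes the $(\Delta R)g_{ij}$ terms cancel exactly, confirming the factor $n-1$ in front of $\Delta R_{ij}$; only the routine matching of the four coefficients in $B_{ij}$ is left unexecuted.
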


As pointed out in \cite{DN}, we can rewrite the equation
(\ref{ric}) for $Rc$ as
$$
\partial_t Rc = (n-1)\Delta Rc + Rc\ast Rc,
$$
where $Rc\ast Rc$ stands for any linear combination of tensors
formed by contraction on $R_{ij}\cdot R_{kl}$. Notice that the
evolution for $Rc$ along the Yamabe flow has the same form as the
evolution for $Rm$ along the Ricci flow. Techniques similar to
Shi's in \cite{S97} can be applied to the Yamabe flow as well, and
we can show that all the covariant derivatives of $Rc$ are locally
uniformly bounded on $(0,T)$ if $|Rc|$ is bounded on $[0,T)$.
Hence, all the covariant derivatives of the Riemannian curvature
$Rm$ are uniformly bounded on $(0,T)$ if $|Rc|$ is bounded on
$[0,T)$ in the locally conformally flat Riemannian manifolds under
the Yamabe flow. Similarly, techniques similar to Shi's in
\cite{S89} can also be applied to the Yamabe flow, and we have
that there exists a solution to the Yamabe flow on noncompact
locally conformally flat Riemannian manifolds with bounded Ricci
curvature in some time interval $[0,T)$ (see \cite{AM}).

It is well known that the singularity analysis plays the important
role in the study of geometric flows. To study the singularities
of geometric flows, one often dilates about a singularity based on
the blow up rate of the curvature. Note that the curvature bound
is immediately satisfied for the blow up about a singularity, but
the injectivity radius bound is not. Especially, it seems hard to
get the injectivity radius bound without extra conditions along
the geometric flows in noncompact manifolds, since the injectivity
radius may not has uniformly lower bound at the initial time. In
order to handle this problem, we shall use the method firstly
proposed by K.Fukaya \cite{F} in metric geometry and later by
D.Glickenstein \cite{DG} to the Ricci flow. The latter work gives
a kind of precompactness theorem (\cite{DG}, Theorem 3) of the
Ricci flow without injectivity radius estimates. We note that
Fukaya-Glickenstein's theorem also holds for the Yamabe flow. The
reasons are below.

First we need an elementary fact in Riemannian geometry and one
can find a proof in \cite{MC}: Let $(M^n, g)$ be a complete
Riemannian manifold with bounded sectional curvature $|sec| \leq
1$. Given a point $p \in M^n$. Denote the exponential map by
$exp_p:T_p M^n \to M^n$ with $B(o,\pi) \subset T_p M^n$ equipped
with metric $exp_p^*g$. Then the injectivity radius at $o$ in
$B(o,\pi)$ has its lower bound that $\text{inj}(o)>\frac{\pi}{2}$.

So we have the following precompactness theorem for the Yamabe
flow.
\begin{lem}\label{precompactness}
 Let $\{(M^n_i,g_i(t),x_i)\}_{i=1}^{\infty}$, $t\in [0,T]$, be a sequence of
 the Yamabe flows on the complete locally conformally flat Riemannian manifolds
such that
$$\sup\limits_{M^n_i\times [0,T]}|Rm(g_i(t))|_{g_i(t)}\leq 1.$$ Let
$\phi_i=exp_{x_i,g_i(0)}$ be the exponential map with respect to
metric $g_i(0)$ and $B(o_i,\frac{\pi}{2})\subset T_{x_i}M_i$
equipped with metric $\widetilde{g}_i(t)\triangleq \phi_i^*g(t)$.
Then $(B(o_i,\frac{\pi}{2}),\widetilde{g}_i(t),o_i)$ subconverges to
a Yamabe flow $(B(o,\frac{\pi}{2}),\widetilde{g}(t),o)$ in
$C^{\infty}$ sense, where $B(o,\frac{\pi}{2})\subset \mathbb{R}^n$
equipped with metric $\widetilde{g}(t)$.
\end{lem}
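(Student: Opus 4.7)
The plan is to reduce Lemma \ref{precompactness} to a Hamilton-type compactness theorem for geometric flows, where the elementary injectivity radius fact quoted above supplies the one ingredient that is not available on the original manifolds $M_i^n$.

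First I would observe that under $|Rm(g_i(0))|_{g_i(0)} \leq 1$ the conjugate radius at $x_i$ is at least $\pi$, so each $\phi_i$ is a local diffeomorphism on $B(o_i,\pi) \subset T_{x_i} M_i^n$. Hence the pullbacks $\widetilde{g}_i(t) = \phi_i^* g_i(t)$ are smooth on $B(o_i,\pi/2)$ and inherit the curvature bound $|Rm(\widetilde{g}_i(t))|_{\widetilde{g}_i(t)} \leq 1$. The elementary fact from \cite{MC} then gives $\mathrm{inj}_{\widetilde{g}_i(0)}(o_i) > \pi/2$, so each $(B(o_i,\pi/2), \widetilde{g}_i(0), o_i)$ has bounded curvature and a definite injectivity radius at the basepoint. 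Moreover, since the Yamabe flow is conformal and its conformal factor pulls back through $\phi_i$, the family $\widetilde{g}_i(t)$ is itself a Yamabe flow on $B(o_i,\pi/2)$.

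Next I would feed these bounds into the Shi-type estimates adapted to the Yamabe flow, whose validity is explained in the paragraph preceding the lemma, to obtain locally uniform bounds
\[
|\nabla^k Rm(\widetilde{g}_i(t))|_{\widetilde{g}_i(t)} \leq C_{k,K,\tau}
\]
for every compact $K \Subset B(o_i,\pi/2)$ and every compact time interval. Combined with the initial-time injectivity radius estimate, this is exactly the input required by Hamilton's pointed compactness theorem (\cite{RH2}): one extracts harmonic coordinates from the curvature and $\mathrm{inj}>\pi/2$ bounds, upgrades the Shi-type estimates to $C^{k,\alpha}$ control on the metric components, and then invokes Arzel\`a--Ascoli with a diagonal argument to extract a subsequential $C^\infty_{\mathrm{loc}}$ limit.

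The limit $(B(o,\pi/2), \widetilde{g}(t), o)$ sits inside $\mathbb{R}^n$ because each tangent space $T_{x_i} M_i^n$ is canonically identified with $\mathbb{R}^n$, and it solves the Yamabe flow because the equation (\ref{yamabe}) passes to $C^\infty_{\mathrm{loc}}$ limits. The main technical point, and the step that will require the most care, is confirming that Hamilton's pointed compactness theorem, usually stated for the Ricci flow, applies verbatim here. Its proof is Riemannian Cheeger--Gromov compactness together with a parabolic regularity upgrade, and the latter only uses that the flow equation is parabolic with bounded coefficients, which is immediate from (\ref{yamabe}) once the curvature and its derivatives are controlled. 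Thus no estimate beyond those already established in the preliminaries is needed, and the argument reduces to verifying that each input of Hamilton's theorem is in place on the exponential chart rather than on $M_i^n$ itself.
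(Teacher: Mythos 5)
Your proposal is correct and follows essentially the same route as the paper: pull back by $\phi_i$ to inherit the curvature bound, invoke the elementary fact to get $\mathrm{inj}(o_i,\widetilde{g}_i(0))>\frac{\pi}{2}$, use the Shi-type derivative estimates already established for the Yamabe flow on locally conformally flat manifolds, and conclude by the proof of Hamilton's compactness theorem. The extra detail you supply (conjugate radius $\geq\pi$ making $\phi_i$ a local diffeomorphism, and the harmonic-coordinate/Arzel\`a--Ascoli mechanics inside Hamilton's argument) is consistent with, and merely expands on, what the paper leaves implicit.
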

\begin{proof}
Since $\widetilde{g}_i(t)\triangleq \phi_i^*g(t)$, we have
$$\sup\limits_{B(o_i,\frac{\pi}{2})\times
[0,T]}|Rm(\widetilde{g}_i(t))|_{\widetilde{g}_i(t)}\leq 1.$$  As
we mentioned before, Shi's local derivative estimates of curvature
operator also hold for the Yamabe flow on complete locally
conformally flat manifolds. Note that we also have
$inj(o_i,\widetilde{g}_i(0))>\frac{\pi}{2}$. Then the result
follows from the proof of Hamilton's precompactness theorem for
the Ricci flow (see \cite{RH}).
\end{proof}

Next we recall the following two definitions in \cite{DG}.

\begin{defn}\cite{DG}\label{defn1}
A sequence of pointed n-dimensional Riemannian manifolds $\{(M^n_i,
 g_i,x_i )\}_{i=1}^{\infty}$ locally converges to a pointed metric space $(X,d,x)$ in
the sense of $C^{\infty}$-local submersions at $x$ if there is a
Riemannian metric $\overline{g}$ on an open neighborhood $V\subset
\mathbb{R}^n$ of $o$, a pseudogroup $\Gamma$ such that the quotient
is well defined, an open set $U\subset X$, and maps $\varphi_i: (V,
o) \to (M_i,x_i )$ such that

(1) $\{(M^n_i,g_i,x_i )\}_{i=1}^{\infty}$ converges to $(X,d, x)$ in
the pointed Gromov-Hausdorff distance,

(2) the identity component of $\Gamma$ is a Lie group germ,

(3) $(V/\Gamma,\bar{d}_{\overline{g}})$ is isometric to $(U,d)$,
where $\bar{d}_{\overline{g}}$ is the induced distance in the
quotient,

(4) $(\varphi_i)_{*}$ is nonsingular on $V$ for all $i\in N$, and

(5) $\overline{g}$ is the $C^{\infty}$ limit of $\varphi^{*}_ig_i$
(uniform convergence on compact sets together with all derivatives).
\end{defn}

\begin{defn}\cite{DG}
A sequence of pointed n-dimensional Riemannian manifolds $\{(M^n_i,
 g_i,x_i )\}_{i=1}^{\infty}$ converges to a pointed metric
space $(X, d, x)$ in the sense of $C^{\infty}$-local submersions if
for every $y\in X$ there exist $y_i  \in M_i$ such that
$\{(M^n_i,g_i,y_i )^{\infty}_{i=1}\}$ locally converges to $(X,d,y)$
in the sense of $C^{\infty}$-local submersions at $y$.
\end{defn}

Note that there exists subsequence $\{( M_{i_k}, g_{i_k}(t),x_{i_k})
\}_{k=1}^{\infty}$
 converges to $(X,d (t) , x)$ for each $t\in [0, T]$ in Gromov-Hausdorff distance
 by $|Rm(g_i(t))|_{g_i(t)}\leq 1$ and Theorem 19 in \cite{DG}. In
 fact
$\phi_i=exp_{x_i,g_i(0)}$ in Lemma \ref{precompactness} defines a
'locally' covering map between $B(o_i,\frac{\pi}{2})\subset
T_{x_i}M_i$ and $B(x_i,\frac{\pi}{2})\subset M_i$. This defines
pseudogroups $\Gamma_i$ acts isometrically on on
$B(o,\frac{1}{4})$(see \cite{F}, P.9 or \cite{DG}, \S 5).
Furthermore, $\Gamma_i$ converge to a limit pseudogroup $\Gamma$
(see see \cite{F}, P.9) such that $(B (o_i, \frac{1}{4}) ,
\Gamma_i)$, where $B(o, \frac{1}{4})\subset B(o_i,\frac{\pi}{2})$,
converges to $(B (o, \frac{1}{4}) ,\Gamma)$ in the equivariant
Gromov-Hausdorff distance(see \cite{F}, Definition 1.9), and hence
$B (o_i, \frac{1}{4}) /\Gamma_i$ converges to $B (o,
\frac{1}{4})/\Gamma$ in the Gromov-Hausdorff distance (see \cite{F},
Lemma 1.11). Since $B (o_i, \frac{1}{4})/\Gamma_i$ is isometric to a
neighborhood of $x_i $, $B (o, \frac{1}{4}) /\Gamma$ is isometric to
a neighborhood of $x$. Note that $B (o_i, \frac{1}{4})$ converges to
$B (o, \frac{1}{4})$ in $C^{\infty}$ sense by Lemma
\ref{precompactness}. So we have proved that $ (M_{i},d_{g_{i}(t)},
x_{i})$ converges to $(X,d (t) , x)$ in the sense of
$C^{\infty}$-local submersions at x. If we identify
$B(o_i,\frac{1}{4})$ with $B(o,\frac{1}{4})$ by map $id$, then
$\varphi_i=\phi_i\circ id$, where $\varphi_i$ is defined in
Definition \ref{defn1}. Note that $\Gamma$ is a Lie group germ by
\cite{F}, \S3. Just notice that if $(M_{i},d_{g_{i}(t)}, x_{i})$
converges to $(X,d (t) , x)$ in the pointed Gromov- Hausdorff
distance, then for every $y \in X$ there exist $y_i \in X_i$ such
that $(M_{i},d_{g_{i}(t)}, y_{i})$ converges to $(X,d (t) , x)$ in
the pointed Gromov-Hausdorff distance(see \cite{DN}, Proposition
12). Then $ (M_{i},d_{g_{i}(t)}, x_{i})$ converges to $(X,d (t) ,
x)$ in the sense of $C^{\infty}$-local submersions.

\begin{thm}\label{Yamabe_precompactness}
 Let $\{(M^n_i,
 g_i(t),x_i )\}_{i=1}^{\infty}$,
where $t\in [0,T]$, be a sequence of pointed solutions to the
Yamabe flows on the locally conformally flat Riemannian manifolds
such that
$$\sup\limits_{M^n_i\times [0,T]}|Rm(g_i(t))|_{g_i(t)}\leq 1,$$ and
for all $i\in N$ and $t\in [0, T]$.

Then there is a subsequence which still denote by $\{( M_{i},
g_{i}(t),x_{i} )\}_{i=1}^{\infty}$ and a one parameter family of
complete pointed metric spaces $(X,d(t),x)$ such that for each $t\in
[0, T]$, $ (M_{i},d_{g_{i}(t)}, x_{i})$ converges to $(X,d (t) , x)$
in the sense of $C^{\infty}$-local submersions and the metric
$\overline{g}(t)$ in definition \ref{defn1} is solution to the
Yamabe flow.
\end{thm}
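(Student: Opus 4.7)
The plan is to combine the uniform curvature bound, Lemma~\ref{precompactness}, and the Fukaya--Glickenstein pseudogroup framework, essentially along the lines of Theorem~3 of \cite{DG}, but with Shi-type estimates for the Yamabe flow replacing those for the Ricci flow. First I would use $|Rm(g_i(t))|_{g_i(t)} \le 1$ together with Theorem~19 of \cite{DG} and a diagonal argument in $t$---extended from a countable dense subset to all of $[0,T]$ by the equicontinuity of $d_{g_i(t)}$ in $t$, which follows from $\partial_t g_i = -R g_i$ together with the scalar curvature bound---to pass to a subsequence with $(M_i,d_{g_i(t)},x_i) \to (X,d(t),x)$ in pointed Gromov--Hausdorff distance for every $t\in [0,T]$.

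Next I would promote this to $C^\infty$-local submersion convergence at the basepoint. Set $\phi_i = \exp_{x_i,g_i(0)}$ and $\widetilde{g}_i(t) = \phi_i^* g_i(t)$ on $B(o_i,\pi/2) \subset T_{x_i}M_i$; Lemma~\ref{precompactness} gives that $\widetilde{g}_i(t)$ subconverges in $C^\infty$ on $B(o,\pi/2) \subset \mathbb{R}^n$ to a Yamabe flow $\widetilde{g}(t)$. Following Fukaya (\cite{F}, \S 5) and Glickenstein (\cite{DG}, \S 5), the local covering $\phi_i \colon B(o_i,1/4) \to B(x_i,1/4)$ generates pseudogroups $\Gamma_i$ acting isometrically on $B(o_i,1/4)$ whose quotient is isometric to a neighborhood of $x_i$. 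By Fukaya's equivariant compactness (\cite{F}, Definition~1.9, Lemma~1.11) the pairs $(B(o_i,1/4),\Gamma_i)$ subconverge in equivariant Gromov--Hausdorff distance to $(B(o,1/4),\Gamma)$ with $\Gamma$ a Lie group germ (\cite{F}, \S 3); hence $B(o,1/4)/\Gamma$ is isometric to a neighborhood of $x$ in $X$, and the maps $\varphi_i = \phi_i \circ \mathrm{id}$ realize Definition~\ref{defn1} at the basepoint.

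To upgrade this to convergence in the sense of $C^\infty$-local submersions at every $y \in X$, I would apply Proposition~12 of \cite{DN} to produce $y_i \in M_i$ with $(M_i,d_{g_i(t)},y_i) \to (X,d(t),y)$ in pointed Gromov--Hausdorff distance, and then repeat the pseudogroup construction at $y_i$ in place of $x_i$. The limit metric $\overline{g}(t)$ on the neighborhood $V$ is then the restriction of the $C^\infty$ limit of $\phi_i^* g_i(t)$, and since each $\phi_i^* g_i(t)$ is itself a Yamabe flow on a pulled-back locally conformally flat piece, this property passes to the $C^\infty$ limit. The main obstacle is keeping the pseudogroup action compatible with the flow in time: one must verify that the $\Gamma_i$, defined via $\exp_{x_i,g_i(0)}$ at $t=0$, act by isometries of $\widetilde{g}_i(t)$ for every $t$. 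This follows because $g_i(t)$ stays in the conformal class of $g_i(0)$ with a $\Gamma_i$-invariant conformal factor (the deck transformations of $\phi_i$ preserve $\phi_i^* g_i(0)$, hence also its $\Gamma_i$-invariant conformal multiples), so the isometric action descends uniformly in $t$ and the limit metric $\overline{g}(t)$ on $V/\Gamma$ is a well-defined Yamabe flow.
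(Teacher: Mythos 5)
Your proposal follows essentially the same route as the paper: Gromov--Hausdorff subconvergence for each $t$ via Theorem 19 of \cite{DG} and the curvature bound, the $C^\infty$ subconvergence of $\phi_i^* g_i(t)$ from Lemma \ref{precompactness}, the Fukaya--Glickenstein pseudogroup construction on $B(o,\frac{1}{4})$ with equivariant convergence and the Lie group germ property, and Proposition 12 of \cite{DN} to pass from the basepoint to arbitrary $y\in X$. The two details you add --- the diagonal/equicontinuity argument in $t$ and the check that $\Gamma_i$ acts isometrically for all $t$ (which also follows directly from $\phi_i\circ\gamma=\phi_i$ pulling back the globally defined $g_i(t)$) --- are points the paper leaves implicit, and your treatment of them is correct.
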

\begin{rem}
In fact, Fukaya-Glickenstein's theorem holds for any sequence of
geometric flows $\{(M^n_i, g_i(t),x_i )\}_{i=1}^{\infty}$,
$\frac{\partial g}{\partial t}=h(g)$ satisfying $h(g_i(t))<C$,
$|Rm(g_i(t))|_{g_i(t)}\leq C$ on $[0,T)$ and Shi's local
derivative estimates of curvature operators hold.
\end{rem}

Note that the limit space $(X,d(t))$ is an Alexandrov space, since
the sectional curvature of $M_{i}$ has a uniformly lower bound.
Finally, we need the following theorem which can be found in
\cite{BGP92}, Theorem 3.6.

\begin{thm}\cite{BGP92}\label{Alexandrov}
Let M be a complete Alexandrov space with curvature $>K$, $K > 0$.
Then $diam (M) \leq \frac{\pi}{\sqrt{K}}$.
\end{thm}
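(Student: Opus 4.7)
The plan is to argue by contradiction. Suppose $\mathrm{diam}(M) > \pi/\sqrt{K}$. Because $M$ is a complete Alexandrov space with a lower curvature bound, it is locally compact, so there exist $p,q\in M$ joined by a minimizing geodesic $\gamma\colon [0,L]\to M$ with $L=d(p,q)>\pi/\sqrt{K}$. Replacing $\gamma$ with a sub-segment (still minimizing) if $L\geq 2\pi/\sqrt{K}$, I may assume $L\in (\pi/\sqrt{K},\,2\pi/\sqrt{K})$, so that $L/2<\pi/\sqrt{K}$. Put $m=\gamma(L/2)$, choose any point $r\in M$ not on the image of $\gamma$ (such $r$ exists because $M$ has dimension at least two, as is standard in the Alexandrov setting of interest), and let $\eta\colon[0,\varepsilon)\to M$ be a minimizing geodesic from $m$ toward $r$. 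Its initial direction makes an angle $\alpha\in(0,\pi)$ with $\dot\gamma(L/2)$; otherwise $\eta$ would trace $\gamma$ and force $r\in\gamma$.

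Both hinges $(p,m;\eta(s))$ and $(q,m;\eta(s))$ have sides of lengths $L/2<\pi/\sqrt{K}$ and $s$, and angles $\pi-\alpha$ and $\alpha$ at $m$ respectively, so Toponogov's hinge comparison for curvature $\geq K$ applies. Using the spherical law of cosines in the model $S^{2}_{K}$ and Taylor-expanding to second order in $s$ yields
\begin{align*}
d(p,\eta(s)) &\leq \tfrac{L}{2}+s\cos\alpha+\tfrac{\sqrt{K}}{2}\sin^{2}\alpha\,\cot\!\bigl(\tfrac{\sqrt{K}L}{2}\bigr)\,s^{2}+O(s^{3}),\\
d(q,\eta(s)) &\leq \tfrac{L}{2}-s\cos\alpha+\tfrac{\sqrt{K}}{2}\sin^{2}\alpha\,\cot\!\bigl(\tfrac{\sqrt{K}L}{2}\bigr)\,s^{2}+O(s^{3}).
\end{align*}

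Adding the two inequalities and invoking the triangle inequality $L=d(p,q)\leq d(p,\eta(s))+d(\eta(s),q)$ gives
\begin{equation*}
0\leq \sqrt{K}\,\sin^{2}\alpha\,\cot\!\bigl(\tfrac{\sqrt{K}L}{2}\bigr)\,s^{2}+O(s^{3}).
\end{equation*}
But $\sqrt{K}L/2\in(\pi/2,\pi)$, so $\cot(\sqrt{K}L/2)<0$, and $\sin\alpha\neq 0$, making the right-hand side strictly negative for all sufficiently small $s>0$. This contradiction forces $L\leq\pi/\sqrt{K}$, proving $\mathrm{diam}(M)\leq \pi/\sqrt{K}$.

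The main delicate step is the quadratic hinge estimate: the linear terms in $s$ cancel after summation, so only the second-order contribution detects the sign of $\cot(\sqrt{K}L/2)$. In Alexandrov geometry this refinement of Toponogov follows from the monotonicity of comparison angles for nested triangles sharing a common vertex --- equivalently, from the Toponogov globalization theorem proved in \cite{BGP92}. The secondary step of producing a transverse geodesic $\eta$ at the midpoint $m$ is automatic whenever $\dim M\geq 2$, which is the situation relevant to the rest of the paper.
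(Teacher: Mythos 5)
The paper does not prove this statement at all: it is quoted verbatim as Theorem 3.6 of \cite{BGP92} and used as a black box, so there is no internal proof to compare yours against. Judged on its own, your argument is the classical second-order hinge estimate at the midpoint; the coefficient $\tfrac{\sqrt K}{2}\sin^2\alpha\,\cot(\tfrac{\sqrt K L}{2})$ extracted from the spherical law of cosines is correct, as is the sign analysis for $\tfrac{\sqrt K L}{2}\in(\tfrac{\pi}{2},\pi)$.

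Two points deserve flagging. First, the step you pass over silently is the one the cancellation actually hinges on: you write the two angles at $m$ as $\alpha$ and $\pi-\alpha$. In an Alexandrov space the triangle inequality in the space of directions only gives $\alpha_p+\alpha_q\ge\pi$ for the two adjacent angles, which yields $\cos\alpha_p+\cos\alpha_q\le 0$ and hence $c_p+c_q\ge L+O(s^2)$ --- the wrong direction. You need the nontrivial converse (the lemma that adjacent angles at an interior point of a shortest path sum to exactly $\pi$, itself a consequence of the comparison condition) to kill the linear term; without it the proof collapses at first order. It is a true and standard fact, but it must be invoked explicitly. Second, your proof is not more elementary than, or independent of, \cite{BGP92}: the global hinge version of Toponogov applied to a side of length $L/2$ close to $\pi/\sqrt K$ is exactly the globalization theorem of that paper, so the argument is legitimate only because the diameter bound appears there \emph{after} globalization. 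Together with the one-dimensional case (which you exclude by fiat, though circles of length up to $2\pi/\sqrt K$ do satisfy the hypotheses) and the positivity of $\alpha$ (distinct shortest paths from a point make positive angle --- again a lemma, not a triviality), these are citation gaps rather than errors; the overall route is sound.
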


\section{Singularity model of the Yamabe flow}
 Recall that R.Hamilton\cite{RH2} has proposed the singularity models
 which classify all the maximal solutions to Ricci flow into three types. We note that the same
 classification can be applied to the Yamabe flow; every maximal
 solution to the Yamabe flow on locally conformally flat
  manifolds with nonnegative Ricci curvature is of only one of the following three types:
\begin{defn}
Suppose that $(M^n,g(t))$ is a solution to the Yamabe flow on a
locally conformally flat manifold with nonnegative Ricci curvature.
If $T<\infty$, we say that the solution forms a
\begin{enumerate}
\item Type I singularity if $\sup\limits_{M\times
[0,T)}(T-t)R<\infty$,

\item Type IIa singularity if $\sup\limits_{M\times
[0,T)}(T-t)R=\infty$.

\end{enumerate}
Similarly, if $T=\infty$, we say that the solution forms a
\begin{enumerate}
\item Type IIb singularity if $\sup\limits_{M\times
[0,\infty)}tR=\infty$,

\item Type III singularity if $\sup\limits_{M\times
[0,\infty)}tR<\infty$.

\end{enumerate}
\end{defn}

For any maximum solution to the Yamabe flow on a locally conformally
flat manifold with nonnegative Ricci curvature, we have that if the
infimum of injectivity radius $\rho(t)$ at all points satisfies
$\rho(t)\geq \frac{c}{\sqrt{M(t)}}$, where $c>0$ is a uniform
constant and $M(t)$ denotes the supremum of the curvature at time
$t$, then there exists a sequence of dilations of the solution which
converges in the limit to one of the following singularity model of
the corresponding type (see \cite{RH2}) in the sense of Definition
\ref{limit_solutions} below.

\begin{defn}\label{limit_solutions}
Suppose that $(M^n,g(t))$ is a limit solution to the Yamabe flow on
a locally conformally flat manifold with nonnegative Ricci
curvature. We say that the limit solution is
\begin{enumerate}

\item  Type I limit solution if it exists for
 $-\infty<t<\Omega$ for some constant $\Omega$ with $0<\Omega<+\infty$ and $R\leq
\frac{\Omega}{\Omega-t}$ everywhere with equality holds somewhere at
$t=0$,

\item Type II limit solution if exists for
 $-\infty<t<+\infty$ and $R\leq
 1$ everywhere with equality holds somewhere at $t=0$.

\item Type III limit solution if it exists for
 $-A<t<+\infty$ for some constant $A$ with $0<A<+\infty$ and $R\leq
 \frac{A}{A+t}$ with equality holds somewhere at $t=0$.
\end{enumerate}
 \end{defn}

As we mentioned before, the injectivity radius lower bound may not
be available in the sequence of dilations of the maximal solution
to the Yamabe flow on complete and noncompact manifolds. Hence the
singularity model in Definition \ref{limit_solutions} may not
suitable for our original Yamabe flow. However, we shall show how
to use Theorem \ref{Yamabe_precompactness} to avoid the assumption
of uniform injectivity radius bound in the next section.

 In order to
prove Theorem \ref{main}, we need a local version of a result
proved by H.L.Gu \cite{GH}, which is based on the B.Chow's Harnack
inequality \cite{chow}.

\begin{thm} \label{expanding_soliton}
Let $D\subset M^n$ be a simply connected open domain of a complete
n-dimensional locally conformally flat Riemannian manifold such
that B.Chow's Harnark inequality and the strong maximum principle
for the Harnark quantity $Z$  of the Yamabe flow (see
(\ref{Benchow's harnack})) hold true on $D$. Then any Type III
limit solution with positive Ricci curvature to the Yamabe flow on
$D\subset M^n$ is necessarily a homothetically expanding gradient
soliton.
\end{thm}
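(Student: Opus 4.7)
The plan is to exploit the rigidity of Chow's Harnack inequality on the domain $D$, in the spirit of Hamilton's characterization of expanding Ricci solitons. The Type III limit hypothesis $R \leq A/(A+t)$ with equality at some $(x_0,0)$ says that the auxiliary function $\Phi(x,t):=(A+t)R(x,t)-A$ attains its interior spacetime maximum value $0$ at $(x_0,0)$. From this one reads off $R(x_0,0)=1$, $\nabla R(x_0,0)=0$, and $R_t(x_0,0)+R(x_0,0)/A=0$. These are precisely the first-order conditions that make the Yamabe Harnack quantity vanish.

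First I would write Chow's Harnack quantity $Z(V)$ for the Yamabe flow in the form $Z(V)=R_t+R/(A+t)+\text{(linear in }\nabla R, V\text{)}+\text{(quadratic in }V\text{ involving Rc)}\geq 0$, optimize over the vector field $V$ to obtain a pointwise minimizing $V_0$ and a scalar $Z_{\min}\geq 0$, and then plug in the values at $(x_0,0)$. The vanishing of $\nabla R$ together with $R_t=-R/(A+t)$ forces $Z_{\min}(x_0,0)=0$; simultaneously the optimizing vector field $V_0(x_0,0)$ is determined (up to the symmetries coming from the kernel of the quadratic form, which are killed by $\mathrm{Rc}>0$).

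Next I would apply the strong maximum principle assumed in the hypothesis: $Z_{\min}$ satisfies a parabolic differential inequality along the Yamabe flow and is nonnegative everywhere, so its vanishing at the interior point $(x_0,0)\in D\times(-A,+\infty)$ propagates to $Z_{\min}\equiv 0$ on $D\times(-A,+\infty)$. This produces a smooth vector field $V(x,t)$ on $D$ (uniquely determined thanks to $\mathrm{Rc}>0$) realizing equality in Chow's Harnack at every point.

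The last step is to turn equality into the expanding gradient soliton equation. Differentiating the equation $Z(V)=0$ with respect to $V$ yields an algebraic identity which, combined with the evolution equations \eqref{scalar}--\eqref{ric} on the locally conformally flat background, gives both a symmetric identity of Hessian type
\[
(n-1)R_{ij}+\bigl(\nabla_iV_j+\nabla_jV_i\bigr)R+\text{(zeroth order)}=\frac{R}{A+t}\,g_{ij}
\]
and the closedness of the dual $1$-form $\omega=V^{\flat}$, i.e.\ $d\omega=0$. Since $D$ is simply connected, $\omega=df$ for a smooth potential $f$ on $D$, so $V=\nabla f$ and the identity becomes the homothetically expanding gradient soliton equation $\mathrm{Rc}+\nabla^2 f=\lambda(t)\,g$ with $\lambda(t)>0$. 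The main obstacle I anticipate is the bookkeeping in this last step: carefully extracting both the symmetric soliton identity and the closedness of $V^{\flat}$ from the vanishing of Chow's Harnack quantity, and verifying that the coefficients produced along the way are exactly those of an expanding (rather than steady) soliton, so that simple connectedness of $D$ can be genuinely invoked to pass from $V$ to a global potential $f$.
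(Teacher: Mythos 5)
Your overall strategy matches the paper's: equality in Chow's Harnack at the spacetime maximum of $(A+t)R$, strong maximum principle to propagate $Z=0$ (in some direction $X$) to all earlier times and all points, first variation in $X$ to pin down the minimizing vector field $X$ via $\nabla_iR+\tfrac{1}{n-1}R_{ij}X^j=0$ (using $Rc>0$ for uniqueness), and then a second, ``differentiated'' identity that yields the soliton equation. Where your sketch is underpowered is precisely the step you flag as ``bookkeeping.'' After applying $\Box=\partial_t-(n-1)\Delta$ to the equality $Z=0$ and to the first-variation equation, the paper does not arrive at a symmetrized Hessian identity of the form you predict; it arrives at
\begin{align*}
R_{ij}\bigl(\nabla_kX^i-(R+\tfrac1t)g_{ik}\bigr)\bigl(\nabla_kX^j-(R+\tfrac1t)g_{jk}\bigr)+A_{ij}X^iX^j=0,
\end{align*}
where $A_{ij}=\frac{1}{2(n-1)(n-2)}B_{ij}+\frac{1}{2(n-2)}(nR_{il}R_{jl}-RR_{ij})$. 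The crucial input you have not identified is that $A_{ij}$ has \emph{nonnegative} eigenvalues $\nu_i=\frac{1}{2(n-1)(n-2)}\sum_{k,l\ne i,\,k>l}(\lambda_k-\lambda_l)^2$ (Chow's (3.13), a fact that uses local conformal flatness). Without this positivity, the displayed identity is the vanishing of a sum whose signs are unknown and does not force the soliton equation. With it, and with $Rc>0$, one reads off directly $\nabla_kX^i=(R+\tfrac1t)\delta^i_k$. Note also that this conclusion already determines the full covariant derivative $\nabla X$ as a multiple of the metric, so $X^\flat$ is closed automatically; there is no separate ``closedness of $\omega$'' step to extract, and simple connectedness of $D$ then gives a potential $f$ with $X=\nabla f$. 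In short: right scaffolding, but the load-bearing element — the sign structure of the curvature quadratic $A_{ij}$, not just coefficient bookkeeping — is missing from your sketch and should be supplied from Chow \cite{chow}.
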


\begin{proof} We follow the argument in \cite{GH} and assume that $D=M$ without loss of generality.
We may assume that, after a shift of the time variable, the Type
III limit solution of the Yamabe flow on locally conformally flat
manifolds is defined for $0<t<+\infty$, where $tR$ achieves its
maximum in space-time. Recall that B.Chow \cite{chow} has proved
the following Harnack inequality
\begin{equation}\label{Benchow's harnack}
Z=\frac{\partial R}{\partial t}+<\nabla R,
X>+\frac{1}{2(n-1)}R_{ij}X^iX^j+\frac{R}{t}\geq 0,
\end{equation}
for the Yamabe flow on the closed locally conformally flat manifolds
with positive Ricci curvature. We remark that by the same proof and
by using the maximum principle, this Harnack inequality clearly
holds for the Yamabe flow on the complete locally conformally flat
manifolds with nonnegative and bounded Ricci curvature.

Since $tR$ achieves its maximum at some $(x_0,t_0)$, (\ref{Benchow's
harnack}) vanishes in the direction $X=0$ at $(x_0,t_0)$. By the
strong maximum principle (see \cite{DN}, Lemma 3.2), we know that at
any $t<t_0$ and any point $x\in M^n$, there is a vector $X\in T_x
M^n$ such that $Z=0$. Take the first variation of $Z$ in $X$, we get
\begin{equation}\label{harnack1}
\nabla_i R+\frac{1}{n-1}R_{ij}X^j= 0.
\end{equation}
We remark that for $(R_{ij})>0$, the equation above uniquely
determines a vector field $X$.

Substituting (\ref{harnack1}) into $Z=0$, we have
\begin{equation}\label{harnack2}
\frac{\partial R}{\partial t}+\frac{R}{t}+\frac{1}{2}\nabla_i R
\cdot X^i=0.
\end{equation}
We now denote $\partial_t-(n-1)\Delta$ by $\Box$. Applying
$\frac{1}{2}X^i\Box$ to (\ref{harnack1}), $\Box$ to (\ref{harnack2})
and then take the sum, we have
\begin{align}\label{harnack3}
&X^i \Box (\nabla_i R)+\frac{1}{2(n-1)}X^iX^j\Box R_{ij}-\nabla_k
R_{ij}(\nabla_k X^j)X^i\nonumber\\
&-(n-1)\nabla_k\nabla_iR\cdot \nabla_k X^i+\Box(\frac{\partial
R}{\partial t}+\frac{R}{t})=0.
\end{align}
We also have
\begin{align}\label{harnack4}
\Box (\nabla_i R)&=\nabla_i (\Box R)-(n-1)R_{il}\nabla_l
R\nonumber\\
&=\nabla_i (R^2)-(n-1)R_{il}\nabla_l R.
\end{align}
By Lemma 3.8 in \cite{chow}, we get
\begin{align}\label{harnack5}
\Box(\frac{\partial R}{\partial t}+\frac{R}{t})=&3(n-1)R\Delta R+\frac{1}{2}(n-1)(2-n)|\nabla R|^2\nonumber\\
&+2R^3+\frac{R^2}{t}-\frac{R}{t^2}.
\end{align}
Substituting (\ref{ric}), (\ref{harnack4}) and (\ref{harnack5}) into
(\ref{harnack3}), we get
\begin{align}\label{harnack6}
&X^i (\nabla_i (R^2)-(n-1)R_{il}\nabla_l
R)+\frac{1}{2(n-1)(n-2)}X^iX^j B_{ij}\nonumber\\
&-\nabla_k R_{ij}(\nabla_k X^j)X^i -(n-1)\nabla_k\nabla_iR\cdot
\nabla_k X^i\nonumber\\
&+3(n-1)R\Delta R+\frac{1}{2}(n-1)(2-n)|\nabla R|^2\nonumber\\
&+2R^3+\frac{R^2}{t}-\frac{R}{t^2}=0.
\end{align}
It follows from (\ref{harnack1}) that
\begin{align}\label{harnack7}
\nabla_k\nabla_i R+\frac{1}{n-1}(\nabla_k
R_{ij})X^i=-\frac{1}{n-1}R_{ij}\nabla_k X^i,
\end{align}
and
\begin{align}\label{harnack8}
X^iR_{il}\nabla_l R+\frac{1}{n-1}R_{il}R_{jl}X^iX^j=0.
\end{align}
We also have
\begin{align}\label{harnack9}
Z=(n-1)\Delta R+<\nabla R,
X>+\frac{1}{2(n-1)}R_{ij}X^iX^j+R^2+\frac{R}{t}=0.
\end{align}
Substituting (\ref{harnack7}), (\ref{harnack8}) and (\ref{harnack9})
into (\ref{harnack6}), we get
\begin{align}\label{harnack9}
&-R(R+\frac{1}{t})^2+\frac{1}{2(n-1)(n-2)}B_{ij}X^iX^j-\frac{1}{2(n-1)}RR_{ij}X^iX^j\nonumber\\
&+\frac{n}{2(n-1)}R_{il}R_{jl}+R_{ij}\nabla_k X^i \nabla_k X^j=0.
\end{align}
By (\ref{harnack1}), we have
$$
\nabla_k\nabla_i R=-\frac{1}{n-1}(X^j\nabla_k R_{ij}+R_{ij}\nabla_k
X^j),
$$
and then by taking the trace and using the evolution equation of
scalar curvature, we get
\begin{align}\label{harnack10}
R_{ij}((R+\frac{1}{t})g_{ij}-\nabla_iX^j)=0.
\end{align}
By (\ref{harnack9}) and (\ref{harnack10}), we conclude that
\begin{align}\label{harnack11}
R_{ij}(\nabla_k X^i-(R+\frac{1}{t})g_{ik})(\nabla_k
X^j-(R+\frac{1}{t})g_{jk})+A_{ij}X^i X^j=0,
\end{align}
where
$A_{ij}=\frac{1}{2(n-1)(n-2)}B_{ij}+\frac{1}{2(n-2)}(nR_{il}R_{jl}-RR_{ij})$.

Then in local coordinates where $g_{ij}=\delta_{ij}$ and the Ricci
tensor $(R_{ij})$ is diagonal, we have
$$
\sum\limits_{i}\lambda_i(\nabla_k
X^i-(R+\frac{1}{t})g_{ik})^2+A_{ij}X^iX^j=0.
$$
By \cite{chow}(3.13), we have
$\nu_i=\frac{1}{2(n-1)(n-2)}\sum\limits_{k,l\neq i,
k>l}(\lambda_k-\lambda_l)^2$, where $\nu_i$ is the eigenvalue of
$A_{ij}$. Since $\lambda_i> 0$, the theorem holds immediately.
\end{proof}

Finally, we need the following
\begin{thm}\cite{GH}\label{pinching_expanding}
There exists no noncompact locally conformally flat Type III limit
solution of the Yamabe flow which satisfies the Ricci pinching
condition
$$Rc \geq \epsilon R g > 0,$$
for some constant $\epsilon >0$.
\end{thm}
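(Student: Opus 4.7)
My plan is to argue by contradiction using Theorem \ref{expanding_soliton} to upgrade the assumed Type III limit solution to a homothetically expanding gradient Yamabe soliton, and then to derive a contradiction between the soliton identities, the Ricci pinching, and noncompactness. Suppose $(M^n, g(t))$ is a noncompact, locally conformally flat, Type III limit solution satisfying $Rc \geq \epsilon R g > 0$. Pass to the universal cover $\widetilde{M}$, which remains complete, locally conformally flat, noncompact, simply connected, and satisfies the same pinching condition. Applying Theorem \ref{expanding_soliton} on $\widetilde{M}$, we obtain a homothetically expanding gradient Yamabe soliton structure: fixing $t_0 > 0$, there exists a smooth potential $f$ with
\begin{equation*}
\nabla_i \nabla_j f = \left(R + \tfrac{1}{t_0}\right) g_{ij},
\end{equation*}
and, from (\ref{harnack1}), $\nabla_i R = -\frac{1}{n-1} R_{ij} \nabla_j f$.

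Next I would analyze the geometry of $f$ and $R$. Since the pinching forces $R > 0$, in particular $R \geq 0$, the Hessian bound $\nabla^2 f \geq (1/t_0) g > 0$ shows that $f$ is uniformly strictly convex, and therefore proper and attaining its unique minimum at some $p \in \widetilde{M}$. At $p$ we have $\nabla f = 0$ and hence $\nabla R = 0$. Along any integral curve of $\nabla f$,
\begin{equation*}
\frac{dR}{d\tau} = \langle \nabla R, \nabla f \rangle = -\frac{1}{n-1} Rc(\nabla f, \nabla f) \leq -\frac{\epsilon}{n-1} R \, |\nabla f|^2 \leq 0,
\end{equation*}
so $R$ is monotone nonincreasing along the flow of $\nabla f$ and attains its supremum at $p$, while $\frac{d}{d\tau}|\nabla f|^2 = 2(R + 1/t_0)|\nabla f|^2$ shows $|\nabla f|$ grows at least exponentially outward from $p$.

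To convert this into a contradiction with noncompactness, I would exploit local conformal flatness together with the presence of the gradient soliton vector field and the strict convexity of $f$ to force a warped product structure: on $\widetilde{M}\setminus\{p\}$ the metric takes the form $g = dr^2 + \phi(r)^2 g_{S^{n-1}}$ with $r$ the distance from $p$ and $f = f(r)$ (this is the rotational symmetry of conformally flat gradient Yamabe solitons, in the spirit of Daskalopoulos--Sesum). In these coordinates the soliton equations and the pinching $Rc \geq \epsilon R g$ become a closed pair of ordinary differential (in)equalities for $\phi(r)$ and $f(r)$, and a Myers-type analysis of the resulting ODE for $\phi$ forces $\phi$ to vanish at a finite second radius $r^{\ast}>0$. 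Thus $\widetilde{M}$ is diffeomorphic to $S^n$, hence compact, which contradicts noncompactness of $\widetilde{M}$ and completes the argument.

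The main obstacle is the third step, namely rigorously deriving the rotationally symmetric warped product form for a complete, locally conformally flat, expanding gradient Yamabe soliton with positive Ricci satisfying the pinching condition, without appealing to nonnegative sectional curvature. Once that reduction is secured, the ODE analysis giving a finite diameter bound is a routine Myers-type computation, so the heart of the work is in establishing the warped product splitting.
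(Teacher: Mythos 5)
The paper offers no proof of this statement: it is quoted verbatim from Gu \cite{GH}, whose argument (following Hamilton's treatment of Type III Ricci-flow limits in \cite{RH2}, \S 20, and Chen--Zhu \cite{CZ2000}) runs through the soliton structure exactly as in your first two steps and then derives the contradiction \emph{analytically}, by integrating the inequality $\langle\nabla R,\nabla f\rangle=-\tfrac{1}{n-1}Rc(\nabla f,\nabla f)\le-\tfrac{\epsilon}{n-1}R|\nabla f|^2$ along the integral curves of $\nabla f$. Since $df/d\tau=|\nabla f|^2$ there, this gives $R\le R(p)\exp\bigl(-\tfrac{\epsilon}{n-1}(f-f(p))\bigr)$, and since $\nabla^2 f\ge t_0^{-1}g$ forces $f$ to grow quadratically in the distance from $p$, the scalar curvature decays super-polynomially; one then plays this decay off against the soliton identities and the pinching to reach a contradiction with completeness and noncompactness. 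No symmetry reduction is needed. Your steps 1 and 2 (gradient structure on a simply connected domain, strict convexity and properness of $f$, monotonicity of $R$ along the flow of $\nabla f$) are correct and coincide with the start of that argument.

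The gap is your step 3, which is not a proof but a statement of the hardest part of an entirely different strategy. Rotational symmetry of complete locally conformally flat gradient Yamabe solitons is itself a substantial theorem (Daskalopoulos--Sesum and its successors), and the versions available assume positive \emph{sectional} curvature or similar hypotheses strictly stronger than the pinched Ricci condition you have here --- precisely the hypothesis this paper is trying to avoid --- so you cannot simply invoke it. Even granting the warped-product form $g=dr^2+\phi(r)^2g_{S^{n-1}}$, the concluding ODE step is not "routine Myers-type": noncompact rotationally symmetric expanding gradient Yamabe solitons with positive curvature do exist, so $\phi$ does not vanish at a finite $r^\ast$ for free; you would have to exhibit exactly how the uniform pinching $Rc\ge\epsilon Rg$ enters the ODE system to force closure, and that computation is absent. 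As written, the proposal therefore does not establish the theorem; I would recommend replacing step 3 with the integration argument sketched above.
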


\section{Pinching estimates}
In \cite{chow}, B.Chow proved the inequality $R_{ij}\geq \epsilon
Rg_{ij}>0$ is preserved under the Yamabe flow on compact locally
conformally flat manifolds. Clearly his proof also works in the
complete setting all curvature operator are uniformly bounded in
space, at each time-slice, which can apply the maximum principle
for complete manifolds. B.Chow \cite{chow} also gets the pinching
estimate that $Rc_{max}-Rc_{min}\leq CR^{1-n\epsilon}$ (so
$|Rc-\frac{1}{n}Rg|\leq CR^{1-n\epsilon}$) under Yamabe flow if
$R_{ij}\geq \epsilon R g_{ij}>0$ holds, where $C$ is a constant
only depending on $g(0)$. But this pinching estimate may not
strong enough for our purpose. In this section, we calculate the
term $\frac{|Rc|^2-\frac{1}{n}R^2}{R^{2-\delta}}$ directly and get
an improved pinching estimate.

\begin{lem}\label{pinching_equ}
If $(M^n,g(0))$, $n\geq 3$, is an n-dimensional locally conformally
flat complete Riemannian manifold and bounded Ricci curvature, then
the following equality holds for any constant $\delta$ under the
Yamabe flow (\ref{yamabe}),
\begin{eqnarray}
(\partial_t-(n-1)\Delta) f&=&\frac{2(1-\delta)(n-1)}{R}<\nabla
f,\nabla R>\nonumber\\
&&-\frac{2(n-1)}{R^{4-\delta}}|R\nabla Rc-\nabla R  Rc |^2\nonumber\\
&&-\frac{(1-\delta)\delta(n-1)}{R^{4-\delta}}(|Rc|^2-\frac{1}{n}R^2)|\nabla
R|^2\nonumber\\
&&+\frac{1}{R^{2-\delta}}(\delta
R(|Rc|^2-\frac{1}{n}R^2)-J)\nonumber,
\end{eqnarray}
where $f=\frac{|Rc|^2-\frac{1}{n}R^2}{R^{2-\delta}}$ and
$J=\frac{2}{n-2}(n(n-1)tr(Rc^3)+R^3-(2n-1)R|Rc|^2)$.
\end{lem}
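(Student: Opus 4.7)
The proof is a direct computation. Write $P=|Rc|^2-\frac{1}{n}R^2$ and $Q=R^{2-\delta}$, so $f=P/Q$. The plan is to compute $(\partial_t-(n-1)\Delta)P$ and $(\partial_t-(n-1)\Delta)Q$ separately and combine them via the quotient rule, then regroup the resulting expression to match the stated form.

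Using $\partial_t g^{ij}=Rg^{ij}$ (from (\ref{yamabe})), equation (\ref{ric}), and the standard identity $R^{ij}\Delta R_{ij}=\frac{1}{2}\Delta|Rc|^2-|\nabla Rc|^2$, and checking directly from the definitions of $B_{ij}$ and $J$ that $\frac{2}{n-2}R^{ij}B_{ij}=-J$, one obtains
$$(\partial_t-(n-1)\Delta)|Rc|^2=2R|Rc|^2-2(n-1)|\nabla Rc|^2-J.$$
Equation (\ref{scalar}) gives similarly $(\partial_t-(n-1)\Delta)R^2=2R^3-2(n-1)|\nabla R|^2$, and subtracting yields $(\partial_t-(n-1)\Delta)P$. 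A routine chain-rule computation for $Q=R^{2-\delta}$ produces $\nabla Q=(2-\delta)R^{1-\delta}\nabla R$ and $(\partial_t-(n-1)\Delta)Q=(2-\delta)R^{3-\delta}-(n-1)(2-\delta)(1-\delta)R^{-\delta}|\nabla R|^2$. These assemble via the quotient rule
$$(\partial_t-(n-1)\Delta)\frac{P}{Q}=\frac{(\partial_t-(n-1)\Delta)P}{Q}-\frac{P(\partial_t-(n-1)\Delta)Q}{Q^2}+\frac{2(n-1)\nabla P\cdot\nabla Q}{Q^2}-\frac{2(n-1)P|\nabla Q|^2}{Q^3}.$$

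The remaining work is algebraic regrouping, and this is the main obstacle. The key identity is
$$|R\nabla Rc-\nabla R\cdot Rc|^2=R^2|\nabla Rc|^2-R\nabla R\cdot\nabla|Rc|^2+|Rc|^2|\nabla R|^2,$$
which follows from $2R^{ij}\nabla_k R_{ij}=\nabla_k|Rc|^2$. Substituting $|Rc|^2=P+R^2/n$ and $\nabla|Rc|^2=\nabla P+(2R/n)\nabla R$ both in this identity and in the assembled expression, the term $-\frac{2(n-1)}{R^{4-\delta}}|R\nabla Rc-\nabla R\cdot Rc|^2$ absorbs all of $|\nabla Rc|^2/R^{2-\delta}$ and the cross term $\nabla R\cdot\nabla|Rc|^2/R^{3-\delta}$. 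The leftover coefficient of $\nabla P\cdot\nabla R/R^{3-\delta}$ works out to exactly $2(1-\delta)(n-1)$, which, together with a matching slice of the $P|\nabla R|^2/R^{4-\delta}$ coefficient, repackages into $\frac{2(1-\delta)(n-1)}{R}<\nabla f,\nabla R>$ via $\nabla f=Q^{-1}\nabla P-(2-\delta)R^{-1}Q^{-1}P\nabla R$. A polynomial simplification then shows that the residual $P|\nabla R|^2/R^{4-\delta}$ coefficient collapses to $-(1-\delta)\delta(n-1)$, while the remaining lower-order pieces combine to $\frac{1}{R^{2-\delta}}(\delta RP-J)$. Several distinct tensor structures at three different powers of $R$ all appear in the intermediate expression, and the uniform splitting $|Rc|^2=P+R^2/n$ must be applied throughout in order to force alignment with the stated form.
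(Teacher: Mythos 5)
Your proposal is correct and follows essentially the same route as the paper: compute the evolution of $|Rc|^2-\frac{1}{n}R^2$ from (\ref{scalar}) and (\ref{ric}) (your compact form $2R|Rc|^2-2(n-1)|\nabla Rc|^2-J$ agrees with the paper's expanded one, since $\frac{2}{n-2}R^{ij}B_{ij}=-J$), apply the quotient/product rule for $\partial_t-(n-1)\Delta$, and regroup the gradient terms using the identity $|R\nabla Rc-\nabla R\,Rc|^2=R^2|\nabla Rc|^2-R\nabla R\cdot\nabla|Rc|^2+|Rc|^2|\nabla R|^2$ together with $\nabla f$. The coefficient bookkeeping you describe ($2(2-\delta)\to 2+2(1-\delta)$ and $-(2-\delta)(3-\delta)+2+2(1-\delta)(2-\delta)=-\delta(1-\delta)$) checks out.
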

\begin{proof}
By (\ref{ric}) and $|Rc|^2=g^{ik}g^{jl}R_{ij}R_{kl}$, we have
\begin{eqnarray*}
\partial_t|Rc|^2&=&2g^{ik}g^{jl}(\partial_tR_{ij})R_{kl}+2Rg^{ik}g^{jl}R_{ij}R_{kl}\\
&=&(n-1)\Delta |Rc|^2-2(n-1)|\nabla
Rc|^2+6\frac{n-1}{n-2}R|Rc|^2\\
&&-\frac{2}{n-2}R^3-\frac{2n(n-1)}{n-2}tr(Rc^3).
\end{eqnarray*}
From (\ref{scalar}), we get
\begin{eqnarray*}
\partial_t R^2=(n-1)\Delta R^2-2(n-1)|\nabla R|^2+2R^3.
\end{eqnarray*}
Hence
\begin{eqnarray*}
\partial_t(|Rc|^2-\frac{1}{n}R^2)
&=&(n-1)\Delta (|Rc|^2-\frac{1}{n}R^2)-2(n-1)(|\nabla
Rc|^2-\frac{1}{n}|\nabla R|^2)\\
&&+6\frac{n-1}{n-2}R|Rc|^2-(\frac{2}{n-2}+\frac{2}{n})R^3-\frac{2n(n-1)}{n-2}tr(Rc^3).
\end{eqnarray*}
Now we denote $\partial_t-(n-1)\Delta$ by $\Box$. So we have
\begin{eqnarray*}
\Box f &=&\frac{\Box(|Rc|^2-\frac{1}{n}R^2)}{R^{2-\delta}}
-(2-\delta)\frac{|Rc|^2-\frac{1}{n}R^2}{R^{3-\delta}}\Box R\\
&&-(2-\delta)(3-\delta)(n-1)\frac{|Rc|^2-\frac{1}{n}R^2}{R^{4-\delta}}|\nabla
R|^2\\
&&+\frac{2(2-\delta)(n-1)}{R^{3-\delta}}<\nabla
R, \nabla (|Rc|^2-\frac{1}{n}R^2)> \\
&\doteq&A+B,
\end{eqnarray*}
where
\begin{eqnarray*}
A&\doteq&-\frac{2(n-1)}{R^{2-\delta}}(|\nabla
Rc|^2-\frac{1}{n}|\nabla
R|^2)\\
&&-(2-\delta)(3-\delta)(n-1)\frac{|Rc|^2-\frac{1}{n}R^2}{R^{4-\delta}}|\nabla
R|^2\\
&&+\frac{2(2-\delta)(n-1)}{R^{3-\delta}}<\nabla R,
\nabla(|Rc|^2-\frac{1}{n}R^2)>
\end{eqnarray*}
contains the gradient terms and
\begin{eqnarray}\label{B}
B&\doteq&\frac{1}{R^{2-\delta}}(6\frac{n-1}{n-2}R|Rc|^2-(\frac{2}{n-2}+\frac{2}{n})R^3-\frac{2n(n-1)}{n-2}tr(Rc^3))\nonumber\\
&&-(2-\delta)\frac{|Rc|^2-\frac{1}{n}R^2}{R^{3-\delta}}R^2\nonumber\\
 &=&\frac{1}{R^{2-\delta}}(\delta(|Rc|^2-\frac{1}{n}R^2)R-J)
\end{eqnarray}
contains the curvature terms. We rewrite $A$ as
\begin{eqnarray*}
\frac{A}{n-1}&=&-\frac{2 }{R^{2-\delta}}(|\nabla
Rc|^2-\frac{1}{n}|\nabla R|^2)-(2-\delta)(3-\delta)
\frac{|Rc|^2-\frac{1}{n}R^2}{R^{4-\delta}}|\nabla
R|^2\\
&&+\frac{2(2-\delta)}{R^{3-\delta}} <\nabla R, \nabla(|Rc|^2-\frac{1}{n}R^2)>\\
&=&-\frac{2 }{R^{2-\delta}}(|\nabla Rc|^2-\frac{1}{n}|\nabla
R|^2)-(2-\delta)(3-\delta)
\frac{|Rc|^2-\frac{1}{n}R^2}{R^{4-\delta}}|\nabla
R|^2\\
&&+\frac{2(1-\delta)}{R^{3-\delta}} <\nabla R,
\nabla(|Rc|^2-\frac{1}{n}R^2)>\\
&&+\frac{2}{R^{3-\delta}} <\nabla R,
\nabla(|Rc|^2-\frac{1}{n}R^2)>
\end{eqnarray*}
Since
$$
\nabla(\frac{|Rc|^2-\frac{1}{n}R^2}{R^{2-\delta}})=\frac{\nabla(|Rc|^2-\frac{1}{n}R^2)}{R^{2-\delta}}
-(2-\delta)\frac{|Rc|^2-\frac{1}{n}R^2}{R^{3-\delta}}\nabla R,
$$
we get
\begin{align*}
\frac{A}{n-1} &=-\frac{2 }{R^{2-\delta}}(|\nabla
Rc|^2-\frac{1}{n}|\nabla R|^2)-(2-\delta)(1+\delta)
\frac{|Rc|^2-\frac{1}{n}R^2}{R^{4-\delta}}|\nabla
R|^2\\
&+\frac{2(1-\delta)}{R} <\nabla R,
\nabla(\frac{|Rc|^2-\frac{1}{n}R^2}{R^{2-\delta}})>\\
&+\frac{2}{R^{3-\delta}}
<\nabla R, \nabla(|Rc|^2-\frac{1}{n}R^2)>.
\end{align*}
Note that
$$
-\frac{2}{R^{2-\delta}}|\nabla
Rc|^2-2\frac{|Rc|^2}{R^{4-\delta}}|\nabla
R|^2+\frac{2}{R^{3-\delta}}<\nabla R, \nabla
|Rc|^2>=-\frac{2}{R^{4-\delta}}|R\nabla Rc-\nabla R  Rc |^2,
$$
so we have
\begin{align}\label{A}
\frac{A}{n-1} &=\frac{2(1-\delta)
}{R}<\nabla(\frac{|Rc|^2-\frac{1}{n}R^2}{R^{2-\delta}}),\nabla R>
-\frac{2 }{R^{4-\delta}}|R\nabla Rc-\nabla R  Rc |^2\nonumber\\
&-\frac{(1-\delta)\delta
}{R^{4-\delta}}(|Rc|^2-\frac{1}{n}R^2)|\nabla R|^2.
\end{align}
Combining with (\ref{A}) and (\ref{B}), we conclude Lemma
\ref{pinching_equ}.
\end{proof}

Next we need the following lemma to control the term $J$ defined in
Lemma \ref{pinching_equ}.
\begin{lem}\label{pinching_ineq}
If $(M^n,g)$, $n\geq 3$, is an n-dimensional complete locally
conformally flat Riemannian manifold and bounded Ricci curvature
satisfying $Rc\geq \epsilon Rg>0$, then we have the following
inequality holds
\begin{eqnarray*}
J\geq \frac{4}{3}n\epsilon R(|Rc|^2-\frac{1}{n}R^2),
\end{eqnarray*}
where $J$ is defined in Lemma \ref{pinching_equ}.
\end{lem}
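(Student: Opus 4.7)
The plan is to shift the Ricci eigenvalues by $\epsilon R$ and reduce the pinched inequality to the unpinched ($\epsilon=0$) case. Set $K := n(n-1)\operatorname{tr}(Rc^3) + R^3 - (2n-1)R|Rc|^2$, so that $J = \tfrac{2}{n-2}K$, and in a local frame diagonalizing $Rc$ let $\lambda_i$ be the eigenvalues. Define $\tilde\lambda_i := \lambda_i - \epsilon R$; by hypothesis $\tilde\lambda_i\geq 0$, and the shifted trace is $\tilde R = (1-n\epsilon)R$. Let $\tilde K$ denote the same polynomial expression built from the $\tilde\lambda_i$'s and $\tilde R$. The first step is to expand $\sum\tilde\lambda_i^2$, $\sum\tilde\lambda_i^3$, and $\tilde R^3$ via the shift and collect powers of $\epsilon R$, arriving at the polynomial identity
\[
K \;=\; \tilde K \;+\; n(n-2)\,\epsilon R\,\bigl(|Rc|^2-\tfrac{1}{n}R^2\bigr).
\]
With this in hand the lemma reduces to $\tilde K \geq 0$: one obtains $K \geq n(n-2)\epsilon R(|Rc|^2 - R^2/n) \geq \tfrac{2}{3} n(n-2)\epsilon R(|Rc|^2 - R^2/n)$, which upon dividing by $(n-2)/2$ is the stated inequality.

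The second step is the algebraic fact $\tilde K\geq 0$ whenever $\tilde\lambda_i\geq 0$. I plan to combine the standard identities $n\sum\tilde\lambda_i^3 - \tilde R\sum\tilde\lambda_i^2 = \sum_{i<j}(\tilde\lambda_i+\tilde\lambda_j)(\tilde\lambda_i-\tilde\lambda_j)^2$ and $n\sum\tilde\lambda_i^2 - \tilde R^2 = \sum_{i<j}(\tilde\lambda_i-\tilde\lambda_j)^2$ to obtain
\[
\tilde K \;=\; \sum_{i<j}\bigl[(n-1)(\tilde\lambda_i+\tilde\lambda_j)-\tilde R\bigr](\tilde\lambda_i-\tilde\lambda_j)^2.
\]
Rewriting the bracket as $\sum_{k\neq i,j}(\tilde\lambda_i+\tilde\lambda_j-\tilde\lambda_k)$ and reindexing the triple sum by unordered $3$-subsets produces $\tilde K = \sum_{\{i,j,k\}} E(\tilde\lambda_i,\tilde\lambda_j,\tilde\lambda_k)$, where
\[
E(a,b,c) \;=\; (a+b-c)(a-b)^2+(a+c-b)(a-c)^2+(b+c-a)(b-c)^2.
\]
A short expansion shows $E(a,b,c) = 2\bigl[a(a-b)(a-c)+b(b-a)(b-c)+c(c-a)(c-b)\bigr]$, i.e.\ twice Schur's expression at $t=1$, which is non-negative on $a,b,c\geq 0$. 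Summing over triples gives $\tilde K\geq 0$.

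The main obstacle is the polynomial bookkeeping in the first step, namely checking that after the shift $\tilde\lambda_i = \lambda_i - \epsilon R$ all the $(\epsilon R)^2$- and $(\epsilon R)^3$-type contributions coming from $\sum\tilde\lambda_i^3$, $\tilde R^3$, and $\tilde R\sum\tilde\lambda_i^2$ cancel pairwise in $K-\tilde K$, leaving only the single clean linear correction $n(n-2)\epsilon R(|Rc|^2-R^2/n)$. Nothing in this calculation is deep, but several $\epsilon$-power and curvature-type terms must be tracked simultaneously. The rest of the argument — the triple decomposition of $\tilde K$ and Schur's inequality — is standard.
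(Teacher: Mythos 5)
Your proof is correct, and it follows a genuinely different route from the paper's, although both proofs pass through the same underlying combinatorics. Both arguments reduce the quantity $\frac{n-2}{2}J$ to the triple sum $\sum_{\{i,j,k\}}E(\lambda_i,\lambda_j,\lambda_k)$, where $E$ is twice the Schur expression. The divergence is in how the factor $\epsilon R$ is extracted. The paper works directly with the ordered unshifted eigenvalues $\lambda_1\leq\cdots\leq\lambda_n$, replaces $-\lambda_j$ by $-\lambda_k$ and $(\lambda_k-\lambda_i)$ by $(\lambda_j-\lambda_i)$ to obtain $\frac{n-2}{2}J\geq 2\sum_{i<j<k}\bigl[\lambda_k(\lambda_k-\lambda_j)^2+\lambda_i(\lambda_j-\lambda_i)^2\bigr]$, uses $\lambda_i,\lambda_k\geq \epsilon R$, and then invokes $(\lambda_k-\lambda_j)^2+(\lambda_j-\lambda_i)^2\geq\frac{1}{3}\bigl[(\lambda_k-\lambda_j)^2+(\lambda_k-\lambda_i)^2+(\lambda_j-\lambda_i)^2\bigr]$. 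This last step is the source of the $\frac{4}{3}$ in the final constant. You instead shift $\tilde\lambda_i=\lambda_i-\epsilon R$, verify the polynomial identity $K=\tilde K+n(n-2)\epsilon R\bigl(|Rc|^2-\tfrac1n R^2\bigr)$ (which I have checked: after substituting $\tilde P=P-2\epsilon R\cdot R+n(\epsilon R)^2$ and $\tilde Q=Q-3\epsilon R\cdot P+3(\epsilon R)^2R-n(\epsilon R)^3$ into $\tilde K$, the coefficients of $(\epsilon R)^2$ and $(\epsilon R)^3$ vanish and only $-n(n-2)\epsilon R(P-\tfrac1n R^2)$ survives), and then apply Schur's inequality to the nonnegative $\tilde\lambda_i$ to conclude $\tilde K\geq 0$. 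This avoids the $\frac{1}{3}$-loss and yields the sharper bound $J\geq 2n\epsilon R\bigl(|Rc|^2-\tfrac1n R^2\bigr)$, of which the stated $\frac{4}{3}n\epsilon R(\cdots)$ is an immediate consequence. So your argument is not only valid but gives a strictly better constant; the trade-off is that you push the work into the shift identity, whereas the paper's estimate keeps the calculation closer to the surface.
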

\begin{proof}
 Let $\lambda_i$ be the
eigenvalues of $Rc$ and assume $\lambda_n\geq \lambda_{n-1}\geq
\cdots \geq \lambda_1$. Then we compute
\begin{align*}
\frac{n-2}{2}J=&n(n-1)\sum\limits_{i}\lambda_i^3+
(\sum\limits_{i}\lambda_i)((\sum\limits_{i}\lambda_i)^2-(2n-1)\sum\limits_{i}\lambda_i^2)\\
=&n(n-1)\sum\limits_{i}\lambda_i^3+
(\sum\limits_{i}\lambda_i)(-2(n-1)\sum\limits_{i}\lambda_i^2+2\sum\limits_{i<j}\lambda_i\lambda_j)\\
=&n(n-1)\sum\limits_{i}\lambda_i^3-2(n-1)(\sum\limits_{i}\lambda_i^3+\sum\limits_{i<j}\lambda_i^2\lambda_j+\sum\limits_{i<j}\lambda_i\lambda_j^2)\\
&+2(\sum\limits_{i<j}\lambda_i^2\lambda_j+\sum\limits_{i<j}\lambda_i\lambda_j^2+3\sum\limits_{i<j<k}\lambda_i\lambda_j\lambda_k)\\
=&(n-1)(n-2)\sum\limits_{i}\lambda_i^3-2(n-2)(\sum\limits_{i<j}\lambda_i^2\lambda_j+\sum\limits_{i<j}\lambda_i\lambda_j^2)\\
&+6\sum\limits_{i<j<k}\lambda_i\lambda_j\lambda_k\\
=&2\sum\limits_{i<j<k}(\lambda_k(\lambda_k-\lambda_i)(\lambda_k-\lambda_j)+\lambda_j(\lambda_j-\lambda_i)(\lambda_j-\lambda_k)\\
&\ \ \ +\lambda_i(\lambda_i-\lambda_k)(\lambda_i-\lambda_j))
\end{align*}
Note that $\lambda_j\leq \lambda_k$ for $j\leq k$. We have
\begin{align*}
\frac{n-2}{2}J
&=2\sum\limits_{i<j<k}(\lambda_k(\lambda_k-\lambda_i)(\lambda_k-\lambda_j)-\lambda_j(\lambda_j-\lambda_i)(\lambda_k-\lambda_j)\\
&\ \ \ +\lambda_i(\lambda_k-\lambda_i)(\lambda_j-\lambda_i))\\
&\geq 2\sum\limits_{i<j<k}(\lambda_k(\lambda_k-\lambda_i)(\lambda_k-\lambda_j)-\lambda_k(\lambda_j-\lambda_i)(\lambda_k-\lambda_j)\\
&\ \ \ +\lambda_i(\lambda_k-\lambda_i)(\lambda_j-\lambda_i))\\
&\geq  2\sum\limits_{i<j<k}(\lambda_k(\lambda_k-\lambda_j)^2
+\lambda_i(\lambda_j-\lambda_i)^2).
\end{align*}
Since $\lambda_i\geq \epsilon R$ for any $i$ and
$(\lambda_k-\lambda_j)^2
 +(\lambda_j-\lambda_i)^2\geq \frac{1}{3}((\lambda_k-\lambda_j)^2+(\lambda_k-\lambda_i)^2
 +(\lambda_j-\lambda_i)^2)$, we get
\begin{align*}
\frac{n-2}{2}J &\geq \frac{2}{3}\epsilon R \sum\limits_{i<j<k}
((\lambda_k-\lambda_j)^2+(\lambda_k-\lambda_i)^2
 +(\lambda_j-\lambda_i)^2)\\
&= \frac{2}{3}\epsilon (n-2)nR
\sum\limits_{i<j}\frac{(\lambda_i-\lambda_j)^2}{n}\\
&= \frac{2}{3}\epsilon (n-2)nR (|Rc|^2-\frac{1}{n}R^2).
\end{align*}
Hence Lemma \ref{pinching_ineq} holds immediately.
\end{proof}

Finally, we get the following improved pinching estimate.
\begin{thm}\label{pinching_estimate}
If $(M^n,g(0))$, $n\geq 3$, is a n-dimensional complete locally
conformally flat Riemannian manifold and bounded Ricci curvature
satisfying $Rc\geq \epsilon Rg>0$, then the following inequlity
holds under the Yamabe flow (\ref{yamabe})
\begin{eqnarray*}
f(t)\leq (\frac{1}{3t})^{\frac{n\epsilon}{3}},
\end{eqnarray*}
where $f$ is defined in Lemma \ref{pinching_equ} and
$\delta=\frac{n\epsilon}{3}$.
\end{thm}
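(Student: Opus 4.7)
My plan is to apply a maximum-principle comparison of $f$ to the time-dependent barrier $\phi(t) := (3t)^{-\delta}$ with $\delta = n\epsilon/3$, ruling out any first violation by combining the evolution inequality from Lemma \ref{pinching_equ} with a sharp pointwise algebraic bound $f \leq C_0 R^{\delta}$ in which the constant $C_0 < 1$.

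First I would simplify the evolution inequality for $f$. Tracing $Rc \geq \epsilon Rg$ forces $n\epsilon \leq 1$ and hence $\delta \in (0, 1/3]$; in the formula of Lemma \ref{pinching_equ} the two terms $-\tfrac{2(n-1)}{R^{4-\delta}}|R\nabla Rc - \nabla R\,Rc|^2$ and $-\tfrac{(1-\delta)\delta(n-1)}{R^{4-\delta}}(|Rc|^2-\tfrac{R^2}{n})|\nabla R|^2$ are non-positive and may be discarded. Substituting $J \geq \tfrac{4n\epsilon}{3} R(|Rc|^2 - \tfrac{R^2}{n})$ from Lemma \ref{pinching_ineq} and using $\delta - \tfrac{4n\epsilon}{3} = -n\epsilon$ collapses the curvature term to $-n\epsilon R f$, yielding the clean differential inequality
\begin{equation*}
\Box f \leq \frac{2(1-\delta)(n-1)}{R}\langle \nabla f, \nabla R\rangle - n\epsilon R f.
\end{equation*}

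Second, I would establish the pointwise bound $f \leq C_0 R^{\delta}$ with $C_0 := \tfrac{n-1}{n}(1 - n\epsilon)^2$. Writing $|Rc|^2 - \tfrac{R^2}{n} = \tfrac{1}{n}\sum_{i<j}(\lambda_i - \lambda_j)^2$ for the eigenvalues $\lambda_i$ of $Rc$ and maximizing $\sum_i \lambda_i^2$ over the polytope $\{\lambda_i \geq \epsilon R,\ \sum_i \lambda_i = R\}$, the (convex) maximum is realized at a vertex where one eigenvalue equals $(1-(n-1)\epsilon)R$ and the rest equal $\epsilon R$, which a direct calculation converts into $|Rc|^2 - \tfrac{R^2}{n} \leq \tfrac{n-1}{n}(1-n\epsilon)^2 R^2$. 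Since $(n-1)/n < 1$ and $(1-n\epsilon)^2 \leq 1$, we have $C_0 < 1$, which is the key strict inequality that will power the contradiction.

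The comparison argument now runs as follows. Since $|Rc|$ and hence $f$ are uniformly bounded while $\phi(t) \to \infty$ as $t \to 0^+$, we have $f < \phi$ for all sufficiently small $t > 0$. If $f \leq \phi$ ever fails, choose a first violation point $(x_0, t_0)$ with $t_0 > 0$ and $f(x_0, t_0) = \phi(t_0)$. Then $x_0$ is a spatial maximum of $f(\cdot, t_0)$, so $\nabla f(x_0, t_0) = 0$ and $\Delta f(x_0, t_0) \leq 0$, while the first-time condition forces $f_t(x_0, t_0) \geq \phi_t(t_0) = -(\delta/t_0)\phi(t_0)$. Substituting $\nabla f = 0$ into the evolution inequality produces $f_t(x_0, t_0) \leq -n\epsilon R(x_0, t_0) \phi(t_0)$, and combining the two yields $R(x_0, t_0) \leq 1/(3t_0)$. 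On the other hand, the algebraic bound at $(x_0, t_0)$ reads $\phi(t_0) = f(x_0, t_0) \leq C_0 R(x_0, t_0)^{\delta}$, forcing $R(x_0, t_0) \geq C_0^{-1/\delta}/(3t_0) > 1/(3t_0)$ since $C_0 < 1$. These two inequalities are contradictory, so no violation exists and $f \leq (3t)^{-\delta}$ on $M \times (0,T)$.

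The main technical obstacle I expect is that on a complete noncompact manifold the spatial maximum of $f(\cdot, t_0)$ need not be attained, so the ``first violation point'' must be justified. Given the hypothesis of bounded $|Rc|$ on $[0,T)$ together with the Shi-type derivative estimates for the Yamabe flow already invoked in Section 2 (which give uniform bounds on $f$ and its derivatives), this can be handled either by an Omori--Yau maximum principle for bounded solutions, or by a standard cutoff localization using a radial barrier adapted to the completeness of $g(0)$.
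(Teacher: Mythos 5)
Your proposal follows essentially the same route as the paper: combine Lemma \ref{pinching_equ} with Lemma \ref{pinching_ineq} to obtain $\Box f\leq \frac{2(1-\delta)(n-1)}{R}\langle\nabla f,\nabla R\rangle-n\epsilon Rf$, close the inequality with the pointwise algebraic bound $f\lesssim R^{\delta}$ (the paper uses $f\leq R^{\delta}$ and an ODE comparison with $y'=-n\epsilon y^{1+1/\delta}$, whose solution is exactly the barrier $(3t)^{-\delta}$; your sharpened constant $C_0<1$ just makes the first-violation argument strict), and finish by the maximum principle. Both you and the paper leave the noncompact maximum principle (attainment of the sup, or Omori--Yau with control of the $|\nabla R|/R$ coefficient) at the level of a remark, so the proposals are on equal footing there.
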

\begin{proof} The assertion is trivial if $(M^n,g(t))$ is Einstein at some time.
Now by Lemma \ref{pinching_equ} and \ref{pinching_ineq}, we have
\begin{eqnarray*}
\partial_t f(t)\leq (n-1)\Delta f+\frac{2(1-\frac{n\epsilon}{3})(n-1)}{R}<\nabla
f,\nabla R>-n\epsilon Rf.
\end{eqnarray*}
Since $Rc>0$, clearly $f\leq R^{\delta}$. So we get
\begin{eqnarray*}
\partial_t f(t)\leq (n-1)\Delta f+\frac{2(1-\delta)(n-1)}{R}<\nabla
f,\nabla R>-n\epsilon f^{1+\frac{1}{\delta}}.
\end{eqnarray*}
Hence Theorem \ref{pinching_estimate} follows from maximum principle
immediately.
\end{proof}

\section{proof of Theorem \ref{main}}

Before presenting the proofs Theorem \ref{main}, we give some
remarks.

First, as we mentioned before, it is hard to control the
injectivity radius uniformly in the sequence of dilations of the
maximal solution to the Yamabe flow on noncompact manifolds
without extra conditions. Note that the curvature bound is
satisfied for the sequence of dilations in all the singularity
models. Hence, by Theorem \ref{Yamabe_precompactness},
 we have $(M_i, g_i(t), x_i)$ with positive Ricci curvature subconverges to metric space $(X,d (t), x)$ in
the sense of $C^{\infty}$-local submersions for the sequence of
dilations in all the singularity models. We have a neighborhood
$V\subset \mathbb{R}^n$ of $o$ with metric $\overline{g_V}(t)$ is
the solution to the Yamabe flow, and $(V,\overline{g_V}(t))$ modulo
an isometric pseudogroup action $\Gamma_V$ is isometric to a
neighborhood $V'$ of $x$ in the limit metric space $(X,d (t))$.
Moreover, there are maps $\varphi_i:(V,o)\to(M_i,x_i)$ such that
 $\overline{g_V}$ is the $C^{\infty}$ limit of $\varphi^{*}_ig_i$.

 Note that one difficulty in applying the methods in the proof of Theorem
 \ref{main} is that we may not apply the weak maximum principle
 directly on $V$. Fortunately, we can apply the weak maximum
 principle on $(M,g_i(t))$ and get the Harnack inequality $Z(g_i)\geq
 0$ and then $Z(\varphi^{*}_ig_i)\geq 0$. Since $\overline{g_V}$ is the $C^{\infty}$ limit of
 $\varphi^{*}_ig_i$, we still have Harnack inequality holds on
 $(V,\overline{g_V}(t))$. Similarly, Theorem \ref{pinching_estimate}
 also holds on  $(V,\overline{g_V}(t))$.

Second, we need to establish the strong maximum principle for the
Harnack quantity $Z$ in $V$. Suppose that $Z$ is positive for all
$Y\in T_{x_0}V$ at $t=t_0$, for any given point $y\in V$. Let
$\Omega\subset V$ be a connected open set such that $\bar{\Omega}$
is a compact manifold with smooth boundary and $\Omega$ contains
both $x_0$ and $y$. We can find a nonnegative function $f$ on $V$
with support on $\Omega$ so that $f(x_0)>0$ and $Z\geq
\frac{f}{t^2_0}$ for all $Y\in T_x V$ for all $x\in\Omega$ at
$t_0$. Let $f$ evolves as
\begin{equation*}
\left\{
\begin{array}{ll}
         \partial_t f=(n-1)\Delta f \quad &\text{in}\ \Omega\times [t_0,T], \\
          f(x,t)=0 &\text{on}\ \partial\Omega\times [t_0,T].
\end{array}
\right.
\end{equation*}
By the scalar strong maximum principle, we conclude that $f>0$ on
$\Omega\times (t_0,T]$. Since $(\partial_t-(n-1)\Delta)Z\geq
-\frac{2}{t}Z$(see \cite{chow}, (3.14)), we get
$(\partial_t-(n-1)\Delta)(Z-\frac{f}{t^2})\geq
-\frac{2}{t}(Z-\frac{f}{t^2})$ on $\Omega\times [t_0,T]$.
Moreover, since $Z\geq 0$, we have $Z\geq \frac{f}{t^2}$ on
$\Omega\times\{t_0\}\cup\partial \Omega\times [t_0,T]$. By the
weak maximum principle, we have $Z\geq \frac{f}{t^2}$. So $Z$ is
positive for all $Y\in T_x V$ for $x\in \Omega$ for any $t>t_0$.

Third, the arguments below show the relation between the Riemannian
neighborhood above the different points in limit space $X$, i.e. we
show that they always have the subset locally isometric to each
other if the intersection of their projection is not empty. By
Theorem \ref{Yamabe_precompactness}, we know that for all $y\in X$
there exist $y_i  \in M_i$ such that $(M^n_i,g_i(t),y_i )$ locally
converges to $(X,d(t),y)$ in the sense of $C^{\infty}$-local
submersions at $y$. Again, we emphasize that the above conclusion
holds because of Proposition 12 in \cite{DG}. Then we have a
neighborhood $U\subset \mathbb{R}^n$ of $o$ with metric
$\overline{g}(t)$ being the solution to the Yamabe flow, and
$(U,\overline{g}(t))$ modulo an isometric pseudogroup action
$\Gamma_U$ is isometric to a neighborhood $U'$ of $y$ in the limit
metric space $(X,d (t))$. Now we assume $W'=U'\cap V' \neq
\emptyset$ and define $\pi_{U}:U\to U'$, $\pi_{V}:V\to V'$. By the
definition of the Gromov-Hausdorff distance, there is a
Gromov-Hausdorff approximation map $\psi_i:X\to M_i$ such that
$\psi_i(W')$ converges to $W'$. Clearly
$\overline{g_U}|_{\pi_{U}^{-1}(W')}$ and
$\overline{g_V}|_{\pi_{V}^{-1}(W')}$ are the $C^{\infty}$ limits of
$(\varphi_U)^*_ig_i|_{\psi_i(W')}$ and
$(\varphi_V)^*_ig_i|_{\psi_i(W')}$. Hence, $\pi_{U}^{-1}(W')$ is
locally isometric to $\pi_{V}^{-1}(W')$.

With the preparations above we now give the proof of Theorem
\ref{main}.

 \textbf{Proof of Theorem \ref{main}.}
Since the sectional curvature is bounded at the initial time
$t=0$, the Yamabe flow has a solution on the complete non-compact
manifold $M^n$ in some time interval $[0,T)$.

If the singularity is of Type I, Type IIa, Type IIb. Just as
\cite{RH2}, we can take a sequence $(x_i,t_i)$ and define the
pointed rescaled solutions $(M^n,g_i(t),x_i)$,
 $t\in (\alpha_i,0]$ by letting $g_i(t) =Q_ig(t_i+Q_i^{-1}t)$,
 where $Q_i=R(x_i,t_i)$ and $\alpha_i=-t_i Q_i$, such that
  $$
 R_{g_i}(x, t)\leq C,
 $$
 for all $x\in M^n$, $t\in (\alpha_i,0]$,
 $$
R_{g_i}(x_i,0)=1,
 $$
 and
$$
t_i Q_i\to\infty.
$$
 Since the Weyl tensor of $M^n$ is vanishing and $Rc>0$, then we get
  $$
 \sup\limits_{M\times (-t_i Q_i,0]} |Rm|_{g_i}(x, t)\leq C.
 $$
  By theorem \ref{Yamabe_precompactness},
 we have $(M_i, g_i(t), x_i)$ subconverges to metric space $(X,d (t) , x)$ in
the sense of $C^{\infty}$-local submersions. Hence we have a
neighborhood $V\subset \mathbb{R}^n$ of $o$ with metric
$\overline{g_V}(t)$ is the ancient solution to the Yamabe flow, and
$(V,\overline{g_V}(t))$ modulo an isometric pseudogroup action
$\Gamma_V$ is isometric to a neighborhood $V'$ of $x$ in the limit
metric space $(X,d (t))$. Moreover, there are maps
$\varphi_i:(V,o)\to(M_i,x_i)$ such that
 $\overline{g_V}$ is the $C^{\infty}$ limit of $\varphi^{*}_ig_i$. Hence
 $|R_{\overline{g_V}}(o,0)|=1$. Applying Theorem \ref{pinching_estimate} on time interval $[-\alpha,0]$,
 we get
\begin{equation}\label{ancient_est}
(|Rc|^2-\frac{1}{n}R^2)(\overline{g_V}(0))\leq
\frac{R^{2-\frac{n\epsilon}{3}}(\overline{g_V}(0))}{(3\alpha)^{\frac{n\epsilon}{3}}}.
\end{equation}
Since $\overline{g_V}(t)$ is an ancient solution, letting
$\alpha\to\infty$, we get
$(|Rc|^2-\frac{1}{n}R^2)(\overline{g_V}(0))\equiv 0$. Then this
implies $Rc_{\overline{g_V}(0)}\equiv c_1>0$ in $V$. Since the Weyl
tensor of $\overline{g_V}$ is vanishing, we conclude that
$sec_{\overline{g_V}(0)}\equiv c_2>0$ in $V$.

 By Theorem \ref{Yamabe_precompactness}, we know that for all $y\in
X$ there exists $y_i  \in M^n_i$ for each $M^n_i$ such that
$(M^n_i,g_i(t),y_i )$ locally converges to $(X,d(t),y)$ in the sense
of $C^{\infty}$-local submersions at $y$. Then we have a
neighborhood $U\subset \mathbb{R}^n$ of $o$ with the metric
$\overline{g_U}(t)$ being the ancient solution to the Yamabe flow,
and $(U,\overline{g_U}(t))$ modulo an isometric pseudogroup action
$\Gamma_U$ is isometric to a neighborhood $U'$ of $y$ in the limit
metric space $(X,d (t))$. Now we assume $W'=U'\cap V' \neq
\emptyset$ and define $\pi_{U}:U\to U'$, $\pi_{V}:V\to V'$. As we
noticed before, $\pi_{U}^{-1}(W')$ is locally isometric to
$\pi_{V}^{-1}(W')$. Then
$sec_{\overline{g}_{\pi_{U}^{-1}(W')}(0)}=sec_{\overline{g}_{\pi_{V}^{-1}(W')}(0)}\equiv
c_2>0$. Now repeat the same arguments before, we can conclude
$sec_{\overline{g}_{U}(0)}=sec_{\overline{g}_{V}(0)}\equiv c_2>0$.

Hence clearly we have for all the point in $X$ there exists a
neighborhood isometric to a Riemannian neighborhood, which has
constant curvature, modula a pesudogroup action. Furthermore, the
curvature Riemannian neighborhood in different point has the same
value $c_2>0$. Then $X$ is an Alexandrov space with curvature$\geq
c_2>0$ by the Corollary in \cite{BGP92} (see $\S 4.6$, in Page 16).
Then $X$ must be compact by Theorem \ref{Alexandrov}, which is a
contradiction.

If the singularity is of Type III, i.e.
$\sup\limits_{M\times[0,\infty)}tR<\infty$. Set
$A=\limsup\limits_{t\to\infty}tM(t)$, where
$M(t)=\sup\limits_{M}R(x,t)$.  By B.Chow's Harnack inequality
(\ref{Benchow's harnack}) and taking $X=0$, we get $\frac{\partial
}{\partial t}(tR)\geq 0$. Hence we have $A>0$. So we can take a
sequence $(x_i,t_i)$ such that $t_i\to\infty$ and
 $A_i\doteq t_iR(x_i,t_i)\to A$.
 Define the pointed rescaled solutions $(M^n,g_i(t),x_i)$,
 $t\in (-t_i R_i,\infty)$, by $g_i(t) =Q_ig(t_i+Q_i^{-1}t)$, where
 $Q_i=R(x_i,t_i)$. For any $\epsilon>0$ we can find a time $\tau<\infty$
such that for $t\geq \tau$ and any $x\in M^n$
$$
tR(x,t)\leq A+\epsilon.
$$
 Then we have
  $$
 R_{g_i}(x, t)\leq \frac{A+\epsilon}{A_i+t},
 $$
for all $x\in M^n$, $t\in [-\frac{A_i(t_i-\tau)}{t_i},\infty)$ and
 $$
R_{g_i}(x_i,0)=1.
 $$

Set $\phi_i=exp_{x_i,g_i(0)}$ and $B(o_i,\frac{\pi}{2})\subset
T_{x_i}M$ equipped with metric $\widetilde{g}_i(t)\triangleq
\phi_i^*g_i(t)$. By Lemma \ref{precompactness}, we get
$(B(o_i,\frac{\pi}{2}),\widetilde{g}_i(t),o_i)$ subconverges to a
Yamabe flow $(B(o,\frac{\pi}{2}),\widetilde{g}(t),o)$ in
$C^{\infty}$ sense. Hence
$$ R_{\widetilde{g}(t)}(x,t)\leq \frac{A}{A+t}$$
for all $x\in B(o,\frac{\pi}{2})$, $t\in (-A,\infty)$ and
$$R_{\widetilde{g}(t)}(o,0)=1.$$

Then by Theorem \ref{expanding_soliton}, we conclude that
$(B(o,\frac{\pi}{2}),\widetilde{g}(t))$ is an expanding soliton of
Yamabe flow, i.e. there is smooth vector field satisfying
$\nabla_k X^i-(R+\frac{1}{t})g_{ik}=0$. Moreover, $X$ is the
unique solution of the equation
\begin{equation}\label{ddddd}
\nabla_i R+\frac{1}{n-1}R_{ij}X^j= 0.
\end{equation}

We use the arguments due to A.Chau and L.F.Tam \cite{CT}(see
Theorem 2.1) to show the injectivity radius of $x_i$ have the
uniformly lower bound with respect to $g_i(0)$. By our assumptions on the positivity of Ricci
curvature, we may then let $W(i) \in TM$ be the unique solutions to
(\ref{ddddd}) on $(M^n, g_i(t))$ for any $i$. Set
$V(i)=\phi_i^*W(i)$. Then $V(i)$ converges to $X$ in the
$C^{\infty}$ sense.

In some coordinates $x^{\alpha}$ of $B(o,\frac{\pi}{2})$, the
integral curves of $-X(\cdot,0)$ (i.e. the vector field $X$ at time
$t=0$) are given by the following
$$
x'_{\alpha} = -\lambda_{\alpha}x_{\alpha} + F_{\alpha}(x)
$$
where $\lambda_{\alpha}\geq c
> 0$ are the positive eigenvalues of $(R+\frac{1}{t})g(\cdot,0)$, $|F(x)| =
O(|x|^2)$ and $|dF(x)| = O(|x|)$. For any $\epsilon > 0$, there
exists sufficient larger $i$ and $0<r_1<\frac{\pi}{2}$ such that the
integral curves of $-V(i)(\cdot,0)$ is given by
$$
 x'_{\alpha} = -\lambda_{\alpha}x_{\alpha} + G^i_{\alpha}(x),
 $$
with $|G^i-F|+ |dG^i -dF|\leq \epsilon $ in $B_{g_i}(o,r_1)$.

Let $x(\tau)$ be an integral curve of $-V(i)(\cdot,0)$ in
$B_{g_i}(o,r_1)$. Set $|x|\leq r_2<r_1$, where $r_2$ is to be
determined later. We calculate
\begin{align*}
\frac{d}{d\tau}|x|^2
&\leq -2c|x|^2+|G^i||x|\\
&\leq -2c|x|^2+\epsilon |x|+C_1|x|^2\\
&\leq -\frac{3c}{2}|x|^2+\epsilon |x|,
\end{align*}
where $C_1>0$ is a constant only depending on $F$ and
$r_2<\frac{c}{2C_1}$. Then if $\frac{r_2}{2}\leq |x| \leq r_2$, we
have $\frac{d}{d\tau}|x|^2<0$ if $\epsilon$ is sufficient small.
Hence for $i$ large enough any integral curve of $-V(i)(\cdot,0)$
starting in $B_{g_i}(o,r_2)$ will stay inside $B_{g_i}(o,r_2)$.

Now Let $x(\tau)$ and $y(\tau)$ be two integral curves of
$-V(i)(\cdot,0)$ inside $B_{g_i}(o,r_2)$. Then we calculate
\begin{align*}
\frac{d}{d\tau}|x-y|^2
&\leq -2c|x-y|^2+||dG^i|||x-y|^2\\
&\leq -c|x-y|^2.
\end{align*}
Hence
\begin{equation}\label{eeeeee}
|x-y|(\tau)\leq \exp(-c\tau)|x-y|^2(0)\leq
4r^2_2\exp(-c\tau).
\end{equation}
 Set $y(\tau)=x(\tau_2-\tau_1+\tau)$. Then we have
$|x(\tau_1)-y(\tau_1)|=|x(\tau_1)-x(\tau_2)|\leq 4r^2_2\exp(-c\tau_1)$.
 Hence $x(\tau)$ converges to a point $x_0\in B_{g_i}(o,r_2)$. By (\ref{eeeeee}), we conclude that
 $y(\tau)$ also converges to $x_0$.

 Next we show $\phi_i$ is injective on $B_{g_i}(o,r_2)$ for $i$ sufficient large, which imply that $inj(g_i(0),x_i)\geq r_2$.
 Otherwise, there exist two points
 $p_1 \neq p_2 \in B_{g_i}(o,r_2)$ such that $\phi_i(p_1)=\phi_i(p_2)=q\in M^n$. Let $\gamma_1$ and $\gamma_2$
 be two integral curves for $-V(i)(\cdot,0)$ starting
at $p_1$ and $p_2$ respectively. Hence $\phi_i(\gamma_1)$ and $\phi_i(\gamma_2)$
 be two integral curves for $-W(i)(\cdot,0)$ starting
at $q$. By uniqueness of the integral curves,
we have $\phi_i(\gamma_1(\tau))=\phi_i(\gamma_2(\tau))$ for all $\tau$.
On the other hand, for all $\tau$, we also have $\gamma_1(\tau ) \neq \gamma_2(\tau)$ by
uniqueness of integral curves.
But $\gamma_1$ and $\gamma_2$ both converge to the point
$x_0\in B_{g_i}(o,r_2)$. It contradicts the fact that
$\phi$ is the diffeomorphism in some neighborhood of $x_0$.

Then we have $(M^n,g_i(t),x_i)$ subconverges to a noncompact
 Type III limit solution to Yamabe flow (an expanding soliton) with $Rc\geq \epsilon Rg>0$ which contradicts to Theorem \ref{pinching_expanding}.

 This completes the proof of Theorem \ref{main}.
 $\Box$

\thanks{\textbf{Acknowledgement}: The second author would like to
thank Dr. Qingsong Cai for helpful discussion.}

\end{document}